\numberwithin{equation}{section}
\newcommand{\fanshu}[2][0]{\Vert#2\Vert_{#1}}
\newcommand{\mesh}[1][h]{\mathcal{T}_{#1}}
\newcommand{\pror}{\mathcal{P}_h}
\newtheorem{theorem}{\noindent{\bf Theorem}}[section]
\newtheorem{definition}{\noindent{Definition}}[section]
\newtheorem{lemma}{\noindent{Lemma}}[section]
\theoremstyle{remark}
\newtheorem{remark}{\noindent{Remark}}[section]
\begin{document}\large 
\title{{\large\textbf{ Solving Biharmonic Eigenvalue Problem With Navier Boundary Condition Via Poisson Solvers On Non-Convex Domains}}}
\author{\small{Baiju Zhang}\thanks{Beijing Computational Science Research Center, Beijing 100193, China, \tt baijuzhang@csrc.ac.cn} \and \small{Hengguang Li}\thanks{Department of Mathematics, Wayne State University, Detroit, MI 48202, USA, \tt li@wayne.edu} \and
\small{Zhimin Zhang}\thanks{Corresponding author. Beijing Computational Science Research Center, Beijing 100193, China, {\tt zmzhang@csrc.ac.cn}; Department of Mathematics, Wayne State University, Detroit, MI 48202, USA, \tt zzhang@math.wayne.edu}}
\date{}\maketitle 

\begin{abstract}\large It is well known that the usual mixed method for solving the biharmonic eigenvalue problem by decomposing the operator into two Laplacians may generate spurious eigenvalues on non-convex domains.
To overcome this difficulty, we adopt a recently developed mixed method, which decomposes the biharmonic equation into three Poisson equations and still recovers the original solution.
Using this idea, we design an efficient biharmonic eigenvalue algorithm, which contains only Poisson solvers. With this approach, eigenfunctions can be confined in the correct space and thereby spurious modes in non-convex domains are avoided.
\textit{A priori} error estimates for both eigenvalues and eigenfunctions on quasi-uniform meshes are obtained; in particular, a convergence rate of $\mathcal{O}({h}^{2\alpha})$ ($ 0<\alpha<\pi/\omega$, $\omega > \pi$ is the angle of the reentrant corner) is proved for the linear finite element.
Surprisingly, numerical evidence demonstrates a $\mathcal{O}({h}^{2})$ convergent rate for the quasi-uniform mesh with the regular refinement strategy even on non-convex polygonal domains.
\end{abstract}

\section{Introduction}\label{introduction}
The biharmonic eigenvalue problem is one of the fundamental model problems in thin plate theories of elasticity. As it is a fourth-order problem, a direct conforming discretization needs a finite element space with $C^1$ continuity. However, due to its high computational cost and complexity of the finite element space structure, this method is not very practical, especially in the three-dimensional case. To overcome this difficulty, various finite element methods have been introduced and analyzed, such as nonconforming methods \cite{morley}, discontinuous Galerkin methods \cite{mozolevski2007hp,georgoulis2009DG}, $C^0$ interior penalty Galerkin ($C^0$IPG) methods \cite{brenner2005c0interior,brenner2015c0interior}, and mixed finite element methods \cite{ciarlet1974mixed}.

In this paper, we are interested in a mixed finite element approximation of the biharmonic eigenvalue problem with Navier boundary conditions in a non-convex polygonal domain. The usual mixed element method \cite{ciarlet1974mixed} is particularly appealing for the biharmonic equations with Navier boundary conditions, because such boundary conditions allow one to decompose a fourth-order equation into two second-order equations that are completely decoupled. This means that a reasonable numerical solution can be obtained by merely applying the finite element Poisson solver in the mixed formulation. Unfortunately, it has been reported in \cite{gerasimov2012corners,nazarov2007ahinged,zhang2008invalidity} that the numerical behavior of this mixed method can be affected by the domain geometry. In a convex domain, it was proved that the corresponding numerical solutions converge to the solution of the primal formulation. However, this is invalid when the domain is non-convex. In fact, the usual mixed formulation defines a weak solution in a space larger than that for the primal formulation. If the domain is non-convex, this mismatch in the function spaces leads to the emergence of singular functions, which in turn causes the solution to be different from that in the primal formulation. As for biharmonic eigenvalue problems, some references \cite{brenner2015c0interior,yang2018theadaptive} also observed similar results. It has been pointed out in these references that the usual mixed method may generate spurious eigenvalues when the domain is non-convex. It seems that the usual mixed method is not suitable for the biharmonic problems with Navier boundary conditions in non-convex domains.

Very recently, H. Li et al. \cite{li2020biharmonic} proposed a modified mixed formulation for the standard biharmonic problem that is decoupled as usual, and at the same time ensures that the associated solution is equal to the solution of the original biharmonic equation in both convex and non-convex domains. The main idea is to introduce an additional intermediate Poisson problem that confines the solution to the correct space. The whole process involves only the Poisson solver. One may ask whether it is possible to extend this method to the biharmonic eigenvalue problem. This paper attempts to answer this question. We show that, both theoretically and numerically, the idea can indeed be used in eigenvalue problems to obtain reliable results without spurious modes. Moreover, in our numerical experiments, we find that even if uniform meshes are used in non-convex domains, the eigenvalues obtained by the modified mixed method can converge at the optimal rate $\mathcal{O}(h^2)$. This is very different from other existing methods, when at most $\mathcal{O}({h}^{2\beta})$ ($ 0<\beta<1$) convergence can be achieved if a graded mesh is not used \cite{brenner2015c0interior}. We would like to emphasize that the new method uses only the Poisson solver on uniform or quasi-uniform meshes, for which there are many packages available with low computational cost ($\mathcal{O}(N)$, where $N$ is the total number of degrees of freedom). This is a huge advantage over existing methods for the biharmonic eigenvalue problem.

The rest of the paper is arranged as follows. In Section \ref{preliminary}, we review the model problem with related notations, introduce the modified mixed formulation proposed in \cite{li2020biharmonic}, and extend it to the biharmonic eigenvalue problem. In Section \ref{sectionApriori}, we derive \textit{a priori} error estimates of numerical eigenvalues and eigenfunctions. In Section \ref{SectionNumericalExp}, several numerical experiments are presented to confirm our theoretical analysis.

For simplicity of notation, we shall use $\lesssim$ to denote less than or equal to up to a constant independent of the mesh size, variables, or other parameters appearing in the inequality.
\section{Preliminary}\label{preliminary}
Let $\Omega\subset\mathbb{R}^2$ be a polygonal domain with boundary $\partial\Omega$. Denote by $H^r(\Omega)$ the usual Sobolev space of real order $r$ with norm $\fanshu[r]{\cdot}$. Conventionally, we set $H^0(\Omega)=L^2(\Omega)$ and $H^1_0(\Omega)=\{v\in H^1(\Omega):\ v|_{\partial\Omega}=0\}$.

Consider the following biharmonic equation and associated eigenvalue problem:
\begin{equation}
\Delta^2\phi=f\text{ in }\Omega,\qquad \phi=\Delta\phi=0\text{ on }\partial\Omega;\label{biharmonicEqu}
\end{equation}
\begin{equation}
\Delta^2u=\lambda u\text{ in }\Omega,\qquad u=\Delta u=0\text{ on }\partial\Omega,\label{biharmonicEig}
\end{equation}
where $f\in L^2(\Omega)$.
The weak formulation of the biharmonic problem \eqref{biharmonicEqu} is to seek $\phi\in H_0^1(\Omega)\cap H^2(\Omega)$ such that
\begin{equation}
(\Delta \phi,\Delta \psi)=(f,\psi)\quad \psi\in H_0^1(\Omega)\cap H^2(\Omega),\label{biharmonicEquPrimal}
\end{equation}
and the weak formulation of \eqref{biharmonicEig} is to seek $u\in H_0^1(\Omega)\cap H^2(\Omega)$ such that
\begin{equation}
(\Delta u,\Delta v)=\lambda(u,v)\quad v\in H_0^1(\Omega)\cap H^2(\Omega).\label{biharmonicEigPrimal}
\end{equation}
The above formulations require the use of $H^2$-conforming methods, which are quite complicated. In practice, this is far from desirable. Instead of using such methods, a popular procedure is to use mixed formulation.

Usually, set $\bar{\rho}=-\Delta\bar{\phi}$, then equation \eqref{biharmonicEqu} can be written as the following mixed formulation: find $(\bar{\rho},\bar{\phi})\in H_0^1(\Omega)\times H_0^1(\Omega)$ such that
\begin{subequations}\label{usualform}
\begin{align}
(\nabla \bar{\phi},\nabla\psi)=(\bar{\rho},\psi),\qquad\forall\psi\in H_0^1(\Omega),\\
(\nabla \bar{\rho},\nabla\nu)=(f,\nu),\qquad\forall\nu\in H_0^1(\Omega).
\end{align}
\end{subequations}
Similarly, by introducing the auxiliary variable $\bar{\sigma}=-\Delta \bar{u}$, we can derive the following eigenvalue problem: find $(\bar{\lambda},\bar{\sigma},\bar{u})\in\mathbb{R}\times H_0^1(\Omega)\times H_0^1(\Omega)$ such that
\begin{subequations}\label{usualformEigen}
\begin{align}
(\nabla \bar{u},\nabla\psi)=(\bar{\sigma},\psi),\qquad\forall\psi\in H_0^1(\Omega),\\
(\nabla \bar{\sigma},\nabla\nu)=(\bar{\lambda}\bar{u},\nu),\qquad\forall\nu\in H_0^1(\Omega).
\end{align}
\end{subequations}
We see that both \eqref{usualform} and \eqref{usualformEigen} consist of two completely decoupled Poisson equations, and are much simpler to solve than \eqref{biharmonicEqu} and \eqref{biharmonicEig}, respectively. However, according to \cite{nazarov2007ahinged,zhang2008invalidity,gerasimov2012corners}, the solution of \eqref{biharmonicEqu} is not always equal to the solution of \eqref{usualform}. The equivalence may depend on the domain geometry. When $\Omega$ is a convex domain, the solution of \eqref{biharmonicEqu} coincides with the solution of \eqref{usualform}, and then numerical approximations of \eqref{usualform} converge to the solution of \eqref{biharmonicEqu}. On the other hand, if $\Omega$ is a non-convex domain, the solution of \eqref{usualform} may be a spurious solution of \eqref{biharmonicEqu}. As a consequence, the mixed finite element method based on \eqref{usualform} may generate numerical solutions that do not converge to the solution of \eqref{biharmonicEqu}. When it comes to eigenvalue problems, similar results have been reported in some references \cite{brenner2015c0interior,yang2018theadaptive}. The mixed method based on \eqref{usualformEigen} may generate spurious eigenvalues if $\Omega$ is non-convex. The main reason for the occurrence of the spurious eigenvalues is that the usual mixed formulation \eqref{usualformEigen} defines eigenfunctions in a larger space than that for \eqref{biharmonicEig}. To avoid spurious eigenvalues, we exploit the method of\cite{li2020biharmonic} which introduces an additional intermediate Poisson problem to capture the singular term so that the solution of \eqref{usualform} can be restricted in the correct space. Using this approach, the eigenfunctions of \eqref{usualformEigen} are guaranteed to be in the correct space, which results in correct corresponding eigenvalues.

To begin with, we assume that $\Omega$ has a re-entrant corner $Q$ and the corresponding interior angle $\omega\in(\pi,2\pi)$. Without loss of generality, we set $Q$ as the origin. Let $(r,\theta)$ be the polar coordinates with $Q$ as the center, and $\omega$ be composed of two half lines $\theta=0$, and $\theta=\omega$. Given $R>0$, we define a sector $K_{\omega}^R\subset\Omega$ with radius $R$ as
\[K_{\omega}^R=\{(r\cos\theta,r\sin\theta)|\ 0\le r\le R,\ 0\le\theta\le\omega\}.\]

Following \cite{li2020biharmonic}, we introduce an $L^2$ function which plays an important role in constructing the equivalent mixed formulation in $\Omega$.
\begin{definition}\label{definitionxi}
Given the parameters $\tau\in(0,1)$ and $R$ such that $K_{\omega}^R\subset\Omega$, we define an $L^2$ function in $\Omega$,
\begin{equation}
\xi(r,\theta;\tau,R):=s^{-}(r,\theta;\tau,R)+\zeta(r,\theta;\tau,R),
\end{equation}
where
\begin{equation}
s^{-}(r,\theta;\tau,R)=\chi(r;\tau,R)r^{-\frac{\pi}{\omega}}\sin(\frac{\pi}{\omega}\theta)\in L^2(\Omega),
\end{equation}
with $\chi(r;\tau,R)\in C^{\infty}(\Omega)$ satisfying $\chi(r;\tau,R)=1$ for $0\le r\le\tau R$ and $\chi(r;\tau,R)=0$ for $r>R$, and $\zeta\in H_0^1(\Omega)$ satisfies
\begin{equation}
-\Delta \zeta=\Delta s^{-} \text{ in }\Omega, \qquad\zeta=0\text{ on }\partial\Omega.\label{zetaEquation}
\end{equation}
\end{definition}
It is east to see that $s^{-}\in C^{\infty}(\Omega\setminus K_{\omega}^{\delta})$ for any $\delta>0$ and $s^{-}=0$ for $(r,\theta)\in \Omega\setminus K_{\omega}^{R}$. Furthermore $\Delta s^{-}=0$ if $r< \tau R$ or $r>R$.

Denote
\[a(w,\psi)=(w,\psi)- (c(w)\xi,\psi),\ b(\psi,v)=(\nabla\psi,\nabla v),\]
where
\begin{equation}
c(w)=\frac{(w,\xi)}{\fanshu{\xi}^2}.\label{definitionCw}
\end{equation}
Based on the above definition, the authors in \cite{li2020biharmonic} introduced the following modified mixed method for \eqref{biharmonicEqu},
\begin{align}\label{biharmonicEquMixed}
\begin{cases}
-\Delta \rho=f\text{ in }\Omega,\\
\quad\ \ \rho=0\text{ on }\partial\Omega;
\end{cases}
\text{   and   }
\begin{cases}
-\Delta \phi=\rho-c(\rho)\xi\text{ in }\Omega,\\
\quad\ \ \phi=0\text{ on }\partial\Omega,
\end{cases}
\end{align}
and corresponding variational formulation: find $(\rho,\phi)\in H_0^1(\Omega)\times H_0^1(\Omega)$ such that
\begin{subequations}\label{modifiedMixedweak}
\begin{align}
-a(\rho,\psi)+b(\psi,\phi)&=0\qquad\forall\psi\in H_0^1(\Omega),\label{modifiedMixedweakA}\\
b(\rho,v)&=(f,v)\qquad\forall v\in H_0^1(\Omega). \label{modifiedMixedweakB}
\end{align}
\end{subequations}

Motivated by this, we propose the following modified eigenvalue problem for \eqref{biharmonicEig},
\begin{align}\label{modifiedEigenEqu}
\begin{cases}
-\Delta \sigma=\lambda u\text{ in }\Omega,\\
\quad\ \ \sigma=0\text{ on }\partial\Omega;
\end{cases}
\text{   and   }
\begin{cases}
-\Delta u=\sigma-c(\sigma)\xi\text{ in }\Omega,\\
\quad\ \ u=0\text{ on }\partial\Omega,
\end{cases}
\end{align}
and the corresponding modified mixed weak formulation: find $(\lambda,\sigma,u)\in \mathbb{R}\times H_0^1(\Omega)\times H_0^1(\Omega)$ such that
\begin{subequations}\label{modifiedEigen}
\begin{align}
-a(\sigma,\psi)+b(\psi,u)&=0\qquad\forall\psi\in H_0^1(\Omega),\\
b(\sigma,v)&=(\lambda u,v)\qquad\forall v\in H_0^1(\Omega).
\end{align}
\end{subequations}
\begin{remark}
If $\Omega$ has $N_c$ re-entrant corners $(N_c>1)$, we only need to rewrite the second equation of \eqref{modifiedEigenEqu} as follows
\begin{align}
\begin{cases}
-\Delta u=\sigma-\sum_{i=1}^{N_c}c_i(\sigma)\xi_i\text{ in }\Omega,\\
\quad\ \ u=0\text{ on }\partial\Omega,
\end{cases}
\end{align}
where for $i=1,2,\cdots,N_c$, $\xi_i$ can be defined as in Definition \ref{definitionxi}, and $c_i(\sigma)$ can be solved by the following linear equations
\begin{equation}
\sum_{j=1}^{N_c}(\xi_i,\xi_j)c_j(\sigma)=(\sigma,\xi_i).
\end{equation}
\end{remark}
Given $f\in L^2(\Omega)$, we can rewrite the following component solution operators for \eqref{modifiedMixedweak}:
\begin{align}
T: L^2(\Omega)\rightarrow H_0^1(\Omega), T f=\phi; \qquad S: L^2(\Omega)\rightarrow H_0^1(\Omega), S f=\rho.\label{solutionOperators}
\end{align}
Using the above operators, we can rewrite \eqref{modifiedEigen} as the following equivalent operator forms
\begin{align}
Tu=\lambda^{-1}u,\qquad\sigma=S(\lambda u).\label{operatorFormNew}
\end{align}
\begin{remark}
According to Theorem 2.7 in \cite{li2020biharmonic}, \eqref{modifiedMixedweak} is equivalent to \eqref{biharmonicEquPrimal}. From this, one can easily see that the solution operator $T$ defined in \eqref{solutionOperators} is also the solution operator of \eqref{biharmonicEquPrimal}, and vice versa. Therefore they induce the same eigenvalue problem. However, this does not mean that any finite element method that can approximate \eqref{modifiedMixedweak} is suitable for the eigenvalue problem \eqref{modifiedEigen}. As pointed out in \cite{boffi2010finite}, there is an intrinsic difference between the source problem and eigenvalue problem, and hence, it is worth analyzing the convergence of the finite element method for \eqref{modifiedEigen}.
\end{remark}

We now introduce the finite element method for \eqref{modifiedMixedweak} and \eqref{modifiedEigen}. Let $\{\mesh\}$ be a series of shape-regular meshes of $\Omega$: there exists a constant $\gamma^*$ such that
\[\frac{h_K}{d_K}\le\gamma^*\ \forall K\in\cup_h\mesh,\]
where, for each $K\in\mesh$, $h_K$ is the diameter of $K$ and $d_K$ is the diameter of the biggest ball contained in $K$. As usual, we set $h=\max_{K\in\mesh}h_K$. Let $S_0^h\subset H_0^1(\Omega)$ be the $C^0$ Lagrange finite element space associated with $\mesh$,
\begin{equation}
S_0^h:=\{v\in C^0(\bar{\Omega})\cap H_0^1(\Omega):\ v|_K\in P_1(K), \forall K\in\mesh\},
\end{equation}
where $P_1(K)$ is the space of polynomials of degree no more than $1$ on $K$.

To define the modified mixed finite element methods of \eqref{modifiedMixedweak} and \eqref{modifiedEigen}, we start with computing the finite element solution $\zeta_h\in S_0^h$ of the Poisson equation
\begin{equation}
b(\zeta_h,v_h)=(\Delta s^-,v_h)\qquad \forall v_h\in S_0^h,\label{equZeta}
\end{equation}
and set $\xi_h=\zeta_h+s^-.$
Denote
\[a_h(w,\psi)=(w,\psi)- (c_h(w)\xi_h,\psi),\]
where
\begin{equation}
c_h(w)=\frac{(w,\xi_h)}{\fanshu{\xi_h}^2}.\label{definitionChwh}
\end{equation}
Then the modified mixed finite element method of \eqref{modifiedMixedweak} is defined as follows:
find $(\rho_h,\phi_h)\in S_0^h\times S_0^h$ such that
\begin{subequations}\label{modifiedMixedFem}
\begin{align}
-a_h(\rho_h,\psi_h)+b(\psi_h,\phi_h)&=0\qquad\forall\psi_h\in S_0^h,\label{modifiedMixedFemA}\\
b(\rho_h,v_h)&=(f,v_h)\qquad\forall v_h\in S_0^h.\label{modifiedMixedFemB}
\end{align}
\end{subequations}
Inspired by the above, we propose the following modified mixed method for \eqref{modifiedEigen}:
find $(\lambda_h,\sigma_h,u_h)\in \mathbb{R}\times S_0^h\times S_0^h$ such that
\begin{subequations}\label{modifiedMixedFemEigen}
\begin{align}
-a_h(\sigma_h,\psi_h)+b(\psi_h,u_h)&=0\qquad\forall\psi_h\in S_0^h,\\
b(\sigma_h,v_h)&=(\lambda_h u_h,v_h)\qquad\forall v_h\in S_0^h.
\end{align}
\end{subequations}
Clearly, given $f\in L^2(\Omega)$, \eqref{modifiedMixedFem} is uniquely solvable. Therefore we can define the following component solution operators:
\begin{align*}
T_h: L^2(\Omega)\rightarrow S_0^h(\Omega), T_hf=\phi_h;&\qquad S_h: L^2(\Omega)\rightarrow S_0^h(\Omega), S_h f=\rho_h.
\end{align*}
Using these operators, we can rewrite \eqref{modifiedMixedFemEigen} as the following equivalent operator forms
\begin{align}
T_hu_h=\lambda_h^{-1}u_h,\qquad \sigma_h=S_h(\lambda_h u_h).
\end{align}
It can be easily verified that both $T: L^2(\Omega)\rightarrow L^2(\Omega)$ in \eqref{operatorFormNew} and $T_h: L^2(\Omega)\rightarrow L^2(\Omega)$ are compact self-adjoint operators.

Given $g\in L^2(\Omega)$, consider the following Poisson equation: find $w\in H_0^1(\Omega)$ such that
\begin{equation}
b(w,v)=(g,v),\qquad\forall v\in H_0^1(\Omega),\label{PoissonEqu}
\end{equation}
and its finite element approximation: find $w_h\in S_0^h$ such that
\begin{equation}
b(w_h,v_h)=(g,v_h),\qquad\forall v_h \in S_0^h\label{PoissonEquFem}.
\end{equation}

Define the Ritz projection operator $\pror: H_0^1(\Omega)\rightarrow S_0^h$ such that
\begin{equation}
b(w-\pror w,v_h)=0,\qquad\forall v_h \in S_0^h\label{RitzPro}.
\end{equation}
It is easy to see that the solution operators corresponding to \eqref{PoissonEqu} and \eqref{PoissonEquFem} are just $S$ and $S_h$, respectively. In addition, for any $g\in L^2(\Omega)$ there hold
\begin{align}
&w= Sg\qquad w_h=S_hg,\label{SShoperator}\\
&\fanshu[1]{Sg}\le C\fanshu[0]{g},\qquad \fanshu[1]{S_hg}\le C\fanshu[0]{g},\label{Sbound}
\end{align}
where $C$ is a positive constant independent of $g$ and mesh size $h$.
It follows from \eqref{modifiedMixedweakB}, \eqref{modifiedMixedFemB} and \eqref{RitzPro} that
\begin{equation}
\rho_h=\pror \rho,\qquad S_h=\pror S.\label{ProAndS}
\end{equation}
According to \cite{grisvard1985elliptic}, we have the following elliptic regularity estimate:
\begin{equation}
\fanshu[1+\alpha]{Sg}\le C_{\Omega,\alpha}\fanshu[0]{g},\qquad\forall g\in L^2(\Omega),\label{ellipticRegularity}
\end{equation}
where $\alpha\in(0,\pi/\omega)$.
From \eqref{ellipticRegularity} and interpolation theory we have
\begin{equation}
\eta_0(h)\equiv\sup_{f\in L^2(\Omega),\fanshu[0]{f}=1}\inf_{\psi\in S_0^h}{\fanshu[1]{Sf-\psi}}\lesssim h^{\alpha}.\label{eta0Estimate}
\end{equation}

\section{A priori error analysis}\label{sectionApriori}
The error estimates of the modified mixed method of biharmonic equation \eqref{biharmonicEqu} can be expressed as follows:
\begin{theorem}
Let $(\rho,\phi)$ and $(\rho_h,\phi_h)$ be the solutions of \eqref{modifiedMixedweak} and \eqref{modifiedMixedFem}, respectively, then
\begin{align}
\fanshu[1]{\rho-\rho_h}&\lesssim\fanshu{\nabla(\rho-\pror\rho)},\\
\fanshu{\rho-\rho_h}&\lesssim\eta_0(h)\fanshu{\nabla(\rho-\rho_h)},\label{L2RhoErr}\\
\fanshu[1]{\phi-\phi_h}&\lesssim\fanshu[1]{\phi-\pror\phi}+\fanshu{\rho-\rho_h}+\fanshu{\xi-\xi_h},\label{H1PhiErr}\\
\fanshu{\phi-\phi_h}&\lesssim\eta_0(h)\fanshu[1]{\phi-\pror\phi}+\fanshu{\rho-\rho_h}+\fanshu{\xi-\xi_h}.\label{L2PhiErr}
\end{align}
\end{theorem}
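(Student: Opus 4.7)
The plan is to exploit the two structural identities \eqref{ProAndS}, $\rho_h=\pror\rho$ and $S_h=\pror S$, which reduce the $\rho$-estimates to textbook Ritz-projection arguments, and then to treat $\phi_h$ as an $H^1$-projection of $\phi$ applied to perturbed data driven by $\rho-\rho_h$ and $\xi-\xi_h$. The first inequality is immediate: since $\rho_h=\pror\rho$, the $H^1$ norm of $\rho-\rho_h$ equals that of $\rho-\pror\rho$, which by Poincar\'e is controlled by $\fanshu{\nabla(\rho-\pror\rho)}$.

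For the $L^2$ bound \eqref{L2RhoErr} I would apply the standard Aubin--Nitsche duality. Taking $w=S(\rho-\rho_h)\in H_0^1(\Omega)$ as the dual state and using Galerkin orthogonality $b(\rho-\rho_h,\psi_h)=0$ for all $\psi_h\in S_0^h$, one rewrites $\fanshu{\rho-\rho_h}^2=b(\rho-\rho_h,w-\psi_h)$; Cauchy--Schwarz combined with the definition of $\eta_0(h)$ in \eqref{eta0Estimate} then closes the estimate.

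The two $\phi$ estimates form the main work. For \eqref{H1PhiErr} I would split $\phi-\phi_h=(\phi-\pror\phi)+(\pror\phi-\phi_h)$ and control the discrete piece $e_h:=\pror\phi-\phi_h\in S_0^h$ by subtracting \eqref{modifiedMixedFemA} from \eqref{modifiedMixedweakA} tested with $v_h=e_h$. Using $b(\phi-\pror\phi,v_h)=0$, this yields $b(e_h,e_h)=a(\rho,e_h)-a_h(\rho_h,e_h)$, whose right-hand side expands as
\begin{equation*}
(\rho-\rho_h,e_h)-\bigl[c(\rho)-c_h(\rho_h)\bigr](\xi,e_h)-c_h(\rho_h)(\xi-\xi_h,e_h).
\end{equation*}
A direct manipulation of \eqref{definitionCw} and \eqref{definitionChwh} produces $|c(\rho)-c_h(\rho_h)|\lesssim\fanshu{\rho-\rho_h}+\fanshu{\xi-\xi_h}$, after which Cauchy--Schwarz and Poincar\'e convert $\fanshu{e_h}$ to $\fanshu[1]{e_h}$, giving \eqref{H1PhiErr} by the triangle inequality. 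For \eqref{L2PhiErr} I would repeat the duality step with $z=S(\phi-\phi_h)$ and use the decomposition $\fanshu{\phi-\phi_h}^2=b(\phi-\phi_h,z-\pror z)+b(\phi-\phi_h,\pror z)$: the first term contributes the factor $\eta_0(h)\fanshu[1]{\phi-\pror\phi}$ once the already-proved $H^1$ bound is inserted, while the second term collapses via the mixed equations to $L^2$ pairings against $\rho-\rho_h$, $\xi-\xi_h$ and $c(\rho)-c_h(\rho_h)$, contributing only the unmultiplied terms on the right-hand side of \eqref{L2PhiErr}.

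The main technical obstacle is the clean linear bound on $|c(\rho)-c_h(\rho_h)|$. It requires $\fanshu{\xi_h}$ to be uniformly bounded below in $h$, which follows from $\fanshu{\xi-\xi_h}\to 0$ and $\fanshu{\xi}>0$, but one must also carefully absorb the cross terms in $(\rho,\xi)/\fanshu{\xi}^2-(\rho_h,\xi_h)/\fanshu{\xi_h}^2$ so that the perturbation is genuinely linear in both $\fanshu{\rho-\rho_h}$ and $\fanshu{\xi-\xi_h}$, rather than producing mixed quadratic terms that would spoil the stated form of the $\phi$-bounds.
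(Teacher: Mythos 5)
Your argument is correct and follows essentially the same route as the paper: both reduce the $\phi$-estimates to the Ritz-projection error of $\phi$ plus an $L^2$ perturbation of the data $\rho-c(\rho)\xi$, controlled by the same key bound $|c(\rho)-c_h(\rho_h)|\lesssim\fanshu{\rho-\rho_h}+\fanshu{\xi-\xi_h}$ (which the paper imports as (3.35) of \cite{li2020biharmonic}), followed by a Nitsche/duality step for the $L^2$ estimates. Your coercivity argument for $e_h=\pror\phi-\phi_h$ is simply an unpacking of the stability bound $\fanshu[1]{S_h g}\lesssim\fanshu{g}$ that the paper invokes directly, since $e_h=S_h\bigl((\rho-c(\rho)\xi)-(\rho_h-c_h(\rho_h)\xi_h)\bigr)$ by \eqref{ProAndS}.
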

\begin{proof}
The first two estimates are the direct result of Theorem 2.1 of \cite{yang2018theadaptive}. To prove the rest of the theorem, we rewrite \eqref{modifiedMixedweakA} and \eqref{modifiedMixedFemA} as follows, respectively:
\begin{align}
b(\phi,\psi)=(\rho-c(\rho)\xi,\psi),\qquad\forall\psi\in H_0^1(\Omega),\\
b(\phi_h,\psi_h)=(\rho_h-c_h(\rho_h)\xi_h,\psi_h),\qquad\forall\psi_h\in S_0^h.
\end{align}
Clearly, by \eqref{SShoperator}, we have
\begin{equation}
\phi=S(\rho-c(\rho)\xi),\qquad \phi_h=S_h(\rho_h-c_h(\rho_h)\xi_h).\label{phiandS}
\end{equation}
From \eqref{Sbound} and \eqref{ProAndS}, for $s=0,1$, we have
\begin{align*}
\fanshu[s]{\phi-\phi_h}&=\fanshu[s]{S(\rho-c(\rho)\xi)-S_h(\rho_h-c_h(\rho_h)\xi_h)}\\
&\le\fanshu[s]{S(\rho-c(\rho)\xi)-S_h(\rho-c(\rho)\xi)}+\fanshu[s]{S_h(\rho-c(\rho)\xi)-S_h(\rho_h-c_h(\rho_h)\xi_h)}\\
&\le\fanshu[s]{S(\rho-c(\rho)\xi)-\pror S(\rho-c(\rho)\xi)}+\fanshu{\rho-\rho_h}+\fanshu{c(\rho)\xi-c_h(\rho_h)\xi_h}\\
&\le\fanshu[s]{\phi-\pror\phi}+\fanshu{\rho-\rho_h}+\fanshu{c(\rho)(\xi-\xi_h)}+|c(\rho)-c_h(\rho_h)|\fanshu{\xi_h}.
\end{align*}
Using (3.35) in \cite{li2020biharmonic} we get
\begin{equation}
|c(\rho)-c_h(\rho_h)|\lesssim\fanshu{\rho-\rho_h}+\fanshu{\xi-\xi_h}.\label{crhominuschrhoh}
\end{equation}
Thus we obtain
\begin{align}
\fanshu[s]{\phi-\phi_h}\lesssim\fanshu[s]{\phi-\pror\phi}+\fanshu{\rho-\rho_h}+\fanshu{\xi-\xi_h}.\label{IntermediateResult}
\end{align}
Therefore, taking $s=1$ we get \eqref{H1PhiErr}, taking $s=0$ and using the Nitsche technique we arrive at \eqref{L2PhiErr}.
\end{proof}

According to the spectral approximation theory \cite{babuska1991eigenvalue,chatelin2011spectral}, we derive the following theorem.
\begin{theorem}\label{TheoremErrAll}
Let $(\lambda_h,\sigma_h,u_h)$ be the j-th eigenpair of \eqref{modifiedMixedFemEigen} with $\fanshu{u_h}=1$, $\lambda$ be the j-th eigenvalue of \eqref{modifiedEigen}. Then when $h$ is sufficiently small, there exists an eigenfunction $(\sigma,u)$ corresponding to $\lambda$ such that
\begin{align}
|\lambda-\lambda_h|&\lesssim \eta_0(h)(\fanshu[1]{u-\pror u}+\fanshu[1]{\sigma-\pror\sigma})+\fanshu{\xi-\xi_h},\label{lambdaErr}\\
\fanshu{u-u_h}&\lesssim \eta_0(h)(\fanshu[1]{u-\pror u}+\fanshu[1]{\sigma-\pror\sigma})+\fanshu{\xi-\xi_h},\label{L2uErr}\\
\fanshu[1]{u-u_h}&\lesssim \fanshu[1]{u-\pror u}+\fanshu{\sigma-\pror\sigma}+\fanshu{\xi-\xi_h},\label{H1uErr}\\
\fanshu{\sigma-\sigma_h}&\lesssim\eta_0(h)(\fanshu[1]{u-\pror u}+\fanshu[1]{\sigma-\pror\sigma})+\fanshu{\xi-\xi_h},\label{L2sigErr}\\
\fanshu[1]{\sigma-\sigma_h}&\lesssim\eta_0(h)\fanshu[1]{u-\pror u}+\fanshu[1]{\sigma-\pror\sigma}+\fanshu{\xi-\xi_h}.\label{H1sigErr}
\end{align}
\end{theorem}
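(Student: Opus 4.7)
The plan is to apply the classical Babu\v{s}ka--Osborn spectral approximation theory to the compact self-adjoint operators $T,T_h:L^2(\Omega)\to L^2(\Omega)$ and then use the source-problem bounds of Theorem~3.1 to convert the resulting abstract estimates into the concrete rates stated in the theorem. The self-adjointness of $T$ and $T_h$ is immediate since $w\mapsto c_h(w)\xi_h$ is the $L^2$-orthogonal projection onto $\mathrm{span}\{\xi_h\}$; hence the bilinear form $a_h$ is symmetric, and this symmetry carries through the composition defining $T_h$ from $S_h$.

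The first step is to verify norm convergence $\fanshu[\mathcal{L}(L^2)]{T-T_h}\to 0$. For arbitrary $f\in L^2(\Omega)$ with $\fanshu{f}=1$, the source-problem estimate \eqref{L2PhiErr} bounds $\fanshu{Tf-T_hf}$ by a sum of three pieces: an $\eta_0(h)\,\fanshu[1]{\phi-\pror\phi}$ term (which is $\mathcal{O}(\eta_0(h)^2)$ by elliptic regularity \eqref{ellipticRegularity} and interpolation), the term $\fanshu{\rho-\rho_h}$ (also $\mathcal{O}(\eta_0(h)^2)$ by the same argument together with \eqref{L2RhoErr}), and the fixed quantity $\fanshu{\xi-\xi_h}$. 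Uniformly in $f$, this tends to zero, so standard spectral theory for compact self-adjoint operators yields $\lambda_h\to\lambda$, and for each such $\lambda_h$ there exists a genuine eigenfunction $u$ of $\lambda$, normalized by $\fanshu{u}=1$, such that
\begin{equation*}
\fanshu{u-u_h}+|\lambda-\lambda_h|\lesssim \sup_{v\in M(\lambda),\,\fanshu{v}=1}\fanshu{(T-T_h)v},
\end{equation*}
where $M(\lambda)$ denotes the eigenspace of $\lambda$.

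I would then quantify the right-hand side by applying Theorem~3.1 with source $f=\lambda v$ for $v\in M(\lambda)$: in that case the exact source-problem solutions are $(\rho,\phi)=(\sigma,v)$ with $\sigma=S(\lambda v)$, while $T_h(\lambda v)$ and $S_h(\lambda v)$ are precisely the discrete approximations appearing in \eqref{L2RhoErr} and \eqref{L2PhiErr}. These two estimates together yield
\begin{equation*}
\fanshu{(T-T_h)v}\lesssim \eta_0(h)\bigl(\fanshu[1]{v-\pror v}+\fanshu[1]{\sigma-\pror\sigma}\bigr)+\fanshu{\xi-\xi_h},
\end{equation*}
which, after absorbing the factor $\lambda$ into the implicit constant, delivers \eqref{lambdaErr} and \eqref{L2uErr}. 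The remaining estimates \eqref{H1uErr}, \eqref{L2sigErr}, and \eqref{H1sigErr} follow by triangle inequalities such as $\fanshu[1]{u-u_h}\le\fanshu[1]{u-T_h(\lambda u)}+\fanshu[1]{T_h(\lambda u)-u_h}$: the first summand is controlled directly by \eqref{H1PhiErr}, and the second is handled through the identity $T_h(\lambda u)-u_h=T_h\bigl(\lambda(u-u_h)+(\lambda-\lambda_h)u_h\bigr)$ together with the $H^1$-stability of $T_h$ (a consequence of \eqref{Sbound}) and the $L^2$-bounds already established. A parallel decomposition based on $S_h(\lambda u)$ in place of $T_h(\lambda u)$, combined with \eqref{L2RhoErr}, handles $\sigma-\sigma_h$.

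The main obstacles I anticipate are twofold. First, the nonstandard term $\fanshu{\xi-\xi_h}$, introduced by the modification designed to suppress spurious modes, must be propagated through every application of Theorem~3.1 without being amplified by any inverse spectral-gap factor; this requires a careful examination of the Babu\v{s}ka--Osborn framework in this perturbed setting. Second, closing the $H^1$-eigenfunction estimate requires a bootstrap: the residual $T_h(\lambda u)-u_h$ can only be controlled after the $L^2$ bounds on $u-u_h$ and $|\lambda-\lambda_h|$ are in hand, so the estimates \eqref{lambdaErr}--\eqref{H1sigErr} must be proved in a specific order, essentially $L^2$-bounds before the corresponding $H^1$-bounds.
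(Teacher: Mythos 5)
Your proposal is correct and follows essentially the same route as the paper: both apply the Babu\v{s}ka--Osborn theory to the compact self-adjoint operators $T$ and $T_h$, establish convergence of $T_h$ to $T$ from the source-problem estimates of Theorem 3.1, quantify $\fanshu{(T-T_h)u}$ by taking $f=\lambda u$ so that $(\rho,\phi)=(\sigma,u)$, and recover the $\sigma$-estimates from $\sigma_h-\pror\sigma=S_h(\lambda_h u_h-\lambda u)$ together with the already-established eigenvalue and eigenfunction bounds. The only cosmetic difference is that you close the $H^1$ eigenfunction bound by a triangle-inequality bootstrap through $T_h(\lambda u)$, whereas the paper invokes the $H^1$ form of the abstract eigenfunction estimate and bounds $\lambda\fanshu[1]{(T-T_h)u}$ via \eqref{H1PhiErr}; both reductions rest on the same ingredients.
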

\begin{proof}
Recall that $Tf=\phi=S(\rho-c(\rho)\xi),\ T_hf=\phi_h=S_h(\rho_h-c_h(\rho_h)\xi_h)$ and $\rho=Sf$, then, from \eqref{Sbound} we have for $s=0,1$
\begin{align*}
\fanshu[s]{Tf\!-\!T_hf}&\lesssim\fanshu[s]{S(\rho\!-\!c(\rho)\xi)\!-\!\pror S(\rho-c(\rho)\xi)}+\fanshu[s]{S_h(\rho-c(\rho)\xi)-S_h(\rho_h-c_h(\rho_h)\xi_h)}\\
&\lesssim \fanshu[s]{S(\rho\!-\!c(\rho)\xi)\!-\!\pror S(\rho-c(\rho)\xi)}+\fanshu{\rho-\rho_h}+\fanshu{c(\rho)\xi-c_h(\rho_h)\xi_h}.
\end{align*}
According to Theorem 2.7 in \cite{li2020biharmonic}, we have $\phi=S(\rho-c(\rho)\xi)\in H_0^1(\Omega)\cap H^2(\Omega)$. Then, using the interpolation theory, \eqref{Sbound}, \eqref{ellipticRegularity} and \eqref{definitionCw} we get
\begin{align*}
&\fanshu[1]{S(\rho-c(\rho)\xi)-\pror S(\rho-c(\rho)\xi)}\lesssim h\fanshu[2]{S(\rho-c(\rho)\xi)}\\
&\lesssim h\fanshu{\rho-c(\rho)\xi}\lesssim h\fanshu{\rho}=h\fanshu{S f}\\
&\lesssim h\fanshu{f},
\end{align*}
and
\begin{align*}
&\fanshu{\rho-\rho_h}\lesssim h^{2\alpha} \fanshu[1+\alpha]{\rho}=h^{2\alpha} \fanshu[1+\alpha]{Sf}\lesssim h^{2\alpha} \fanshu{f}.
\end{align*}
By the Nitche technique, we obtain
\[\fanshu[0]{S(\rho-c(\rho)\xi)-\pror S(\rho-c(\rho)\xi)}\lesssim h^{1+\alpha}\fanshu{f}.\]
Using Lemma 3.3 and (3.35) in \cite{li2020biharmonic} we obtain
\begin{align*}
|c(\rho)-c_h(\rho_h)|&\le \frac{1}{\fanshu{\xi}}\fanshu{\rho-\rho_h}+\frac{\fanshu{\rho_h}}{\fanshu{\xi_h}^2}\fanshu{\xi-\xi_h}+\frac{(\fanshu{\xi_h}+\fanshu{\xi})\fanshu{\rho_h}}{\fanshu{\xi}\fanshu{\xi_h}^2}\fanshu{\xi-\xi_h}\\
&\lesssim h^{2\alpha}(\fanshu[1+\alpha]{\rho}+\fanshu{\rho})\lesssim h^{2\alpha}\fanshu{f}.
\end{align*}
Applying the above inequality leads to
\begin{align*}
\fanshu{c(\rho)\xi-c_h(\rho_h)\xi_h}\le\fanshu{c(\rho)\xi-c(\rho)\xi_h}+\fanshu{c(\rho)\xi_h-c_h(\rho_h)\xi_h}\\
\lesssim \fanshu{\xi-\xi_h}|c(\rho)|+|c(\rho)-c_h(\rho_h)|
\lesssim h^{2\alpha}\fanshu{f}.
\end{align*}
Combining all the above inequalities we have
\begin{align}
\fanshu[1]{Tf-T_hf}&\lesssim h\fanshu{f},\\
\fanshu[0]{Tf-T_hf}&\lesssim h^{2\alpha}\fanshu{f}.
\end{align}
From the spectral approximation theory \cite{babuska1991eigenvalue,chatelin2011spectral} we get
\begin{equation}
|\lambda-\lambda_h|\lesssim \lambda\fanshu{(T-T_h)u}\label{llhErr}
\end{equation}
and
\begin{equation}
\fanshu[s]{u-u_h}\lesssim \lambda\fanshu{(T-T_h)u}\qquad s=0,1.\label{uuhErr}
\end{equation}
Take $f=\lambda u$ in \eqref{modifiedMixedweak}, then using \eqref{L2PhiErr} and \eqref{L2RhoErr} we obtain
\begin{align}
\lambda\fanshu{(T-T_h)u}&\lesssim\eta_0(h)\fanshu[1]{T(\lambda u)-\pror T(\lambda u)}+\fanshu{S(\lambda u)-S_h(\lambda u)}+\fanshu{\xi-\xi_h}\notag\\
&\lesssim\eta_0(h)\fanshu[1]{u-\pror u}+\fanshu{\sigma-\pror\sigma}+\fanshu{\xi-\xi_h}\notag\\
&\lesssim\eta_0(h)(\fanshu[1]{u-\pror u}+\fanshu[1]{\sigma-\pror\sigma})+\fanshu{\xi-\xi_h}.\label{lTThErr}
\end{align}
Inserting \eqref{lTThErr} into \eqref{llhErr} and \eqref{uuhErr} we have \eqref{lambdaErr} and \eqref{L2uErr}, respectively.
By \eqref{H1PhiErr} we deduce
\begin{align}
\lambda\fanshu[1]{(T-T_h)u}&\lesssim\fanshu[1]{T(\lambda u)-\pror T(\lambda u)}+\fanshu{S(\lambda u)-S_h(\lambda u)}+\fanshu{\xi-\xi_h}\notag\\
&=\fanshu[1]{u-\pror u}+\fanshu{\sigma-\pror\sigma}+\fanshu{\xi-\xi_h},
\end{align}
from which and \eqref{uuhErr} we can get \eqref{H1uErr}.
Applying \eqref{Sbound}, the triangle inequality, \eqref{lambdaErr} and \eqref{L2uErr} yields
\begin{align}
\fanshu[s]{\sigma_h-\pror\sigma}&=\fanshu[s]{S_h(\lambda_hu_h)-\pror S(\lambda u)}\notag\\
&\lesssim \fanshu{\lambda_h u_h-\lambda u}\le |\lambda|\fanshu{u-u_h}+|\lambda-\lambda_h|\fanshu{u_h}\notag\\
&\lesssim \eta_0(h)(\fanshu{u-\pror u}+\fanshu[1]{\sigma-\pror\sigma})+\fanshu{\xi-\xi_h},\qquad s=0,1.\label{inequality1}
\end{align}
Using the Nitche technique we have
\begin{equation}
\fanshu{\sigma-\pror\sigma}\lesssim\eta_0(h)\fanshu[1]{\sigma-\pror\sigma}.\label{inequality2}
\end{equation}
Combining \eqref{inequality1} and \eqref{inequality2} we arrive at \eqref{L2sigErr} and \eqref{H1sigErr}.
\end{proof}

For any $(\psi,v)\in H_0^1(\Omega)\times H_0^1(\Omega),\ v\neq 0$, define the Rayleigh quotient
\begin{equation}
\lambda^R(v)=\frac{-a_h(\psi,\psi)+2b(\psi,v)}{(v,v)}.
\end{equation}
\begin{lemma}
Let $(\lambda,\sigma,u)$ be the eigenpair of \eqref{modifiedMixedFemEigen}, then for all $(\psi,v)\in H_0^1(\Omega)\times H_0^1(\Omega),\ v\neq0$, there holds
\begin{align}
\lambda^R(v)-\lambda&=\frac{-a(\psi-\sigma,\psi-\sigma)+2b(\psi-\sigma,v-u)}{(v,v)}-\frac{\lambda(v-u,v-u)}{(v,v)}\notag\\
&-\frac{(c(\psi)\xi-c_h(\psi)\xi_h,\psi)}{(v,v)}.\label{diffRay}
\end{align}
\end{lemma}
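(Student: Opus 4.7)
The plan is to multiply both sides of the claimed identity by $(v,v)$ and verify the resulting algebraic equation by direct expansion, leaning on bilinearity of $a$, $b$, symmetry of $a$, and the eigenvalue equations for $(\lambda,\sigma,u)$.

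First, I would collapse the discrepancy between $a_h$ and $a$ in the Rayleigh quotient. Directly from the definitions of $a$ and $a_h$, one has the elementary identity
\[
a_h(\psi,\psi)-a(\psi,\psi)=(c(\psi)\xi-c_h(\psi)\xi_h,\psi),
\]
so the stray correction term on the right-hand side of \eqref{diffRay} is precisely what is needed to replace $-a_h(\psi,\psi)$ by $-a(\psi,\psi)$ in the numerator. After this substitution, everything in the target identity involves only the bilinear forms $a$, $b$ and the $L^2$ inner product.

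Second, I would expand the three "squares" on the right by bilinearity and symmetry:
\[
a(\psi-\sigma,\psi-\sigma)=a(\psi,\psi)-2a(\psi,\sigma)+a(\sigma,\sigma),
\]
\[
b(\psi-\sigma,v-u)=b(\psi,v)-b(\psi,u)-b(\sigma,v)+b(\sigma,u),
\]
\[
(v-u,v-u)=(v,v)-2(v,u)+(u,u).
\]
After cancelling $-a(\psi,\psi)$, $2b(\psi,v)$, and $-\lambda(v,v)$, the claim reduces to the finite-dimensional identity
\[
0=-2\bigl[a(\psi,\sigma)-b(\psi,u)\bigr]+2\bigl[b(\sigma,v)-\lambda(v,u)\bigr]+\bigl[a(\sigma,\sigma)-2b(\sigma,u)+\lambda(u,u)\bigr].
\]

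Third, I would close the proof by invoking the eigenvalue equations. Testing $-a(\sigma,\cdot)+b(\cdot,u)=0$ against $\psi\in H_0^1(\Omega)$ and using symmetry of $a$ gives $a(\psi,\sigma)=b(\psi,u)$, killing the first bracket; testing $b(\sigma,\cdot)=\lambda(u,\cdot)$ against $v\in H_0^1(\Omega)$ kills the second; and specializing both to $\psi=\sigma$, $v=u$ yields $a(\sigma,\sigma)=b(\sigma,u)=\lambda(u,u)$, so the last bracket equals $\lambda(u,u)-2\lambda(u,u)+\lambda(u,u)=0$.

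There is no real analytical obstacle here — the argument is purely algebraic. The only place I expect minor friction is in the bookkeeping: one has to use symmetry of $a$ to line $a(\psi,\sigma)$ up with the form $a(\sigma,\psi)$ that appears in the eigenvalue equation, and one must keep track of the signs through three separate "polarization" expansions. I also note a small inconsistency in the statement, since the identity in fact uses the eigenvalue relations on all of $H_0^1(\Omega)$ rather than on $S_0^h$, so the eigenpair $(\lambda,\sigma,u)$ invoked here should be understood as a solution of the continuous problem \eqref{modifiedEigen}.
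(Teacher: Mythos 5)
Your proof is correct and follows essentially the same route as the paper's: replace $a_h(\psi,\psi)$ by $a(\psi,\psi)$ at the cost of the correction term $(c(\psi)\xi-c_h(\psi)\xi_h,\psi)$, polarize the three quadratic terms around $(\sigma,u)$, and cancel the leftover brackets using the eigenvalue equations together with the symmetry of $a$. Your side observation is also right: the paper's own proof eliminates the residual terms by invoking \eqref{modifiedEigen}, so the eigenpair $(\lambda,\sigma,u)$ in the lemma statement should indeed refer to the continuous problem \eqref{modifiedEigen} rather than to \eqref{modifiedMixedFemEigen}.
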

\begin{proof}
Note that
\begin{equation}
a_h(w,\psi)=a(w,\psi)+(c(w)\xi-c_h(w)\xi_h,\psi),\qquad\forall w,\psi\in H_0^1(\Omega).
\end{equation}
It is easy to see that
\begin{align*}
a(\psi,\psi)&=a(\psi-\sigma,\psi-\sigma)+2a(\psi,\sigma)-a(\sigma,\sigma),\\
b(\psi,v)&=b(\psi-\sigma,v-u)+b(\psi,u)+b(\sigma,v)-b(\sigma,u),\\
(v,v)&=(v-u,v-u)+2(v,u)-(u,u).
\end{align*}
Some tedious manipulation yields
\begin{align*}
-a_h(\psi,\psi)+2b(\psi,v)-\lambda(v,v)&=-a(\psi-\sigma,\psi-\sigma)+2b(\psi-\sigma,v-u)-\lambda(v-u,v-u)\\
&-(c(\psi)\xi-c_h(\psi)\xi_h,\psi)+(2b(\psi,u)-2a(\psi,\sigma))\\
&+(a(\sigma,\sigma)-2b(\sigma,u)+\lambda(u,u))+(2b(\sigma,v)-\lambda(v,u)).
\end{align*}
From \eqref{modifiedEigen} we can eliminate the last three terms. Dividing both sides by $(v,v)$, we have the desired conclusion.
\end{proof}
\begin{theorem}\label{TheoremIdentity}
Under the conditions of Theorem \ref{TheoremErrAll}, the following identity holds
\begin{equation}
\lambda_h-\lambda=(-a(\sigma_h-\sigma,\sigma_h-\sigma)+2b(\sigma_h-\sigma,u_h-u)-\lambda\fanshu{u_h-u}^2-(c(\sigma_h)\xi-c_h(\sigma_h)\xi_h,\sigma_h)).
\end{equation}
\end{theorem}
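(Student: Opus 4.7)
The plan is to derive this identity as a direct specialization of the Rayleigh quotient identity proved in the preceding lemma. The idea is to substitute $(\psi,v)=(\sigma_h,u_h)$, where $(\lambda_h,\sigma_h,u_h)$ is the $j$-th discrete eigenpair from Theorem \ref{TheoremErrAll} with $\fanshu{u_h}=1$, and $(\lambda,\sigma,u)$ is the continuous eigenpair to which it converges. Since the lemma holds for arbitrary $(\psi,v)\in H_0^1(\Omega)\times H_0^1(\Omega)$ with $v\neq 0$, and since $S_0^h\subset H_0^1(\Omega)$, this substitution is legitimate.

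The only nontrivial step is to verify that $\lambda^R(u_h)=\lambda_h$ under this choice. First I would take $\psi_h=\sigma_h$ in the first equation of \eqref{modifiedMixedFemEigen} to get $a_h(\sigma_h,\sigma_h)=b(\sigma_h,u_h)$. Next I would take $v_h=u_h$ in the second equation of \eqref{modifiedMixedFemEigen} to get $b(\sigma_h,u_h)=\lambda_h(u_h,u_h)=\lambda_h$, using the normalization $\fanshu{u_h}=1$. Combining these two identities yields
\[
\lambda^R(u_h)=\frac{-a_h(\sigma_h,\sigma_h)+2b(\sigma_h,u_h)}{(u_h,u_h)}=\frac{-\lambda_h+2\lambda_h}{1}=\lambda_h.
\]

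Once this is established, plugging $(\psi,v)=(\sigma_h,u_h)$ into the identity \eqref{diffRay} of the preceding lemma, together with $(u_h,u_h)=1$, immediately produces
\[
\lambda_h-\lambda=-a(\sigma_h-\sigma,\sigma_h-\sigma)+2b(\sigma_h-\sigma,u_h-u)-\lambda\fanshu{u_h-u}^2-(c(\sigma_h)\xi-c_h(\sigma_h)\xi_h,\sigma_h),
\]
which is the claim. There is no genuine obstacle here; the entire argument is a specialization of the Rayleigh quotient identity, with the principal bookkeeping being the verification that the discrete Rayleigh quotient at $(\sigma_h,u_h)$ reproduces the discrete eigenvalue $\lambda_h$, which relies on testing the discrete eigenvalue system against $(\sigma_h,u_h)$ themselves and exploiting the normalization $\fanshu{u_h}=1$.
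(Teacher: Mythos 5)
Your proposal is correct and follows essentially the same route as the paper: the paper likewise observes that $\lambda_h$ equals the discrete Rayleigh quotient $\bigl(-a_h(\sigma_h,\sigma_h)+2b(\sigma_h,u_h)\bigr)/(u_h,u_h)$ and then substitutes $(\psi,v)=(\sigma_h,u_h)$ into \eqref{diffRay}. The only difference is that you spell out the verification of the Rayleigh-quotient identity by testing \eqref{modifiedMixedFemEigen} with $(\sigma_h,u_h)$, a detail the paper leaves implicit.
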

\begin{proof}
It follows from \eqref{modifiedMixedFemEigen} that
\begin{equation}
\lambda_h=\frac{-a_h(\sigma_h,\sigma_h)+2b(\sigma_h,u_h)}{(u_h,u_h)}.
\end{equation}
Taking $(\psi,v)=(\sigma_h,u_h)$ in \eqref{diffRay} we obtain the desired identity.
\end{proof}

From Theorem \ref{TheoremErrAll}, Theorem \ref{TheoremIdentity}, \eqref{eta0Estimate}, the interpolation theory and Lemma 3.3 in \cite{li2020biharmonic}, we can deduce the following theorem.
\begin{theorem}\label{theoremFine}
Under the conditions of Theorem \ref{TheoremErrAll}, the following inequalities hold
\begin{align}
\fanshu[1]{u-u_h}&\lesssim h,\\
\fanshu{u-u_h}&\lesssim h^{2\alpha},\\
\fanshu{\sigma-\sigma_h}+h^{\alpha}\fanshu[1]{\sigma-\sigma_h}&\lesssim h^{2\alpha},\\
|\lambda-\lambda_h|&\lesssim h^{2\alpha}.
\end{align}
\end{theorem}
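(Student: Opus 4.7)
The plan is to reduce Theorem \ref{theoremFine} to Theorem \ref{TheoremErrAll} by bounding the four quantities that appear on the right-hand sides of \eqref{lambdaErr}--\eqref{H1sigErr}: $\fanshu[1]{u-\pror u}$, $\fanshu[1]{\sigma-\pror\sigma}$ (together with $\fanshu{\sigma-\pror\sigma}$), and $\fanshu{\xi-\xi_h}$. Once each of these is controlled at the correct power of $h$, the conclusions drop out by direct substitution.

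First I would establish the regularity of the eigenpair. Since $u = T(\lambda u)$, Theorem 2.7 of \cite{li2020biharmonic}---applied with source $f = \lambda u \in L^2(\Omega)$---yields $u \in H_0^1(\Omega)\cap H^2(\Omega)$ with $\fanshu[2]{u}\lesssim \lambda \fanshu{u}\lesssim 1$. For $\sigma = S(\lambda u)$, the elliptic regularity bound \eqref{ellipticRegularity} gives $\fanshu[1+\alpha]{\sigma}\lesssim 1$ for any $\alpha\in(0,\pi/\omega)$. Standard interpolation theory applied to the Ritz projection $\pror$ then delivers
\[
\fanshu[1]{u-\pror u}\lesssim h, \qquad \fanshu[1]{\sigma-\pror\sigma}\lesssim h^\alpha, \qquad \fanshu{\sigma-\pror\sigma}\lesssim \eta_0(h)\fanshu[1]{\sigma-\pror\sigma}\lesssim h^{2\alpha},
\]
where the $L^2$ bound uses the Aubin--Nitsche argument already carried out in \eqref{inequality2}. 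The auxiliary term is handled by Lemma 3.3 of \cite{li2020biharmonic}: recalling that $\xi-\xi_h=\zeta-\zeta_h$ is the $P_1$ error for the Poisson problem \eqref{equZeta}, this gives $\fanshu{\xi-\xi_h}\lesssim h^{2\alpha}$.

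Second, I would substitute these four ingredients into \eqref{lambdaErr}--\eqref{H1sigErr} using $\eta_0(h)\lesssim h^\alpha$ from \eqref{eta0Estimate}. This produces at once
\[
|\lambda-\lambda_h|+\fanshu{u-u_h}+\fanshu{\sigma-\sigma_h}\lesssim h^\alpha\bigl(h+h^\alpha\bigr)+h^{2\alpha}\lesssim h^{2\alpha},
\]
and $\fanshu[1]{\sigma-\sigma_h}\lesssim h^\alpha\!\cdot\! h+h^\alpha+h^{2\alpha}\lesssim h^\alpha$, which combine to give the claimed $\fanshu{\sigma-\sigma_h}+h^\alpha\fanshu[1]{\sigma-\sigma_h}\lesssim h^{2\alpha}$. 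For $\fanshu[1]{u-u_h}$, inequality \eqref{H1uErr} yields $\fanshu[1]{u-u_h}\lesssim h+h^{2\alpha}$, which is $\lesssim h$ upon selecting $\alpha\in(1/2,\pi/\omega)$---an admissible choice because $\omega<2\pi$ forces $\pi/\omega>1/2$.

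The main obstacle, such as it is, lies in the first step: Theorem 2.7 of \cite{li2020biharmonic} is formulated for the source problem, so one must carefully reinterpret it with $f=\lambda u$ to transfer full $H^2$-regularity to the eigenfunction $u$; without this the $\fanshu[1]{u-u_h}\lesssim h$ rate would degrade to $h^\alpha$. All remaining work is a routine insertion into the error machinery assembled in Theorems \ref{TheoremErrAll} and \ref{TheoremIdentity}.
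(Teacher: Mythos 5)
Your proposal is correct and follows essentially the same route as the paper, whose ``proof'' is only a one-line citation of Theorem \ref{TheoremErrAll}, \eqref{eta0Estimate}, interpolation theory and Lemma 3.3 of \cite{li2020biharmonic} --- exactly the ingredients you assemble in detail, including the $H^2$ regularity of $u$ via Theorem 2.7 of \cite{li2020biharmonic} with $f=\lambda u$ and the choice $\alpha>1/2$ needed to absorb $h^{2\alpha}$ into $h$ in the $H^1$ bound for $u$. The only cosmetic difference is that the paper also lists Theorem \ref{TheoremIdentity} among its ingredients, but, as your substitution shows, \eqref{lambdaErr} alone already yields $|\lambda-\lambda_h|\lesssim h^{2\alpha}$.
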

\begin{remark}
If we use the graded meshes defined in \cite{li2020biharmonic}, then applying Lemma 4.6 in \cite{li2020biharmonic} and Theorem \ref{TheoremErrAll}, we can improve the result of Theorem \ref{theoremFine} as follows
\begin{align*}
\fanshu{u-u_h}+h\fanshu[1]{u-u_h}&\lesssim {h^{2}},\\
\fanshu{\sigma-\sigma_h}+h\fanshu[1]{\sigma-\sigma_h}&\lesssim h^{2},\\
|\lambda-\lambda_h|&\lesssim h^{2}.
\end{align*}
However, numerical experiments show that even if the uniform meshes are used in non-convex domains, the numerical eigenvalues can converge at $\mathcal{O}(h^2)$.
\end{remark}

\section{Numerical experiments}\label{SectionNumericalExp}
In this section, we present some numerical experiments to confirm the \textit{a priori} error estimates derived in Theorem \ref{theoremFine}. Since the usual mixed finite element method \eqref{usualformEigen} is valid in convex domain (see \cite{brenner2005c0interior,yang2018theadaptive}), here we consider only non-convex domains. We shall consider the following three different domains: the L-shaped domain, the slit domain, and the square ring, which are plotted in Figure \ref{figureDomains}. To implement the modified mixed finite element method \eqref{modifiedMixedFemEigen}, we need the following cut-off function\cite{li2020biharmonic}:
\begin{align*}
\chi(r;\tau,R)=
\begin{cases}
0,&\text{ if } r\ge R,\\
1,&\text{ if } r\le \tau R,\\
\frac{1}{2}\!-\!\frac{15}{16}\left(\frac{2r}{R(1\!-\!\tau)}\!-\!\frac{1+\tau}{1\!-\!\tau}\right)+\frac{5}{8}\left(\frac{2r}{R(1\!-\!\tau)}\!-\!\frac{1+\tau}{1\!-\!\tau}\right)^3\!-\!\frac{3}{16}\left(\frac{2r}{R(1\!-\!\tau)}\!-\!\frac{1+\tau}{1\!-\!\tau}\right)^5,&\text{ otherwise},
\end{cases}
\end{align*}
where $R=\frac{1}{4},\ \tau=\frac{1}{8}$ for the first two domains, and $R=\frac{1}{6},\ \tau=\frac{1}{8}$ for the last domain. All algorithms are implemented by MATLAB. The linear systems obtained by the modified mixed method are solved by the multigrid solver in the iFEM package \cite{chen2008ifem}.
\subsection{L-shaped domain}
The first experiment is for the L-shaped domain with $\Omega=(0,1)^2\setminus([1/2,1)\times(0,1/2])$. To begin with, we compare our method with the quadratic $C^0$ IPG, the quintic Argyris element, the Ciarlet-Raviart mixed method, the Morley element reported in \cite{brenner2015c0interior} and the quadratic and cubic Ciarlet-Raviart mixed method reported in \cite{yang2018theadaptive}. Our numerical method is computed on a quasi-uniform mesh generated by matlab code \verb"initmesh" with maximum edge size $=1/80$. The numerical results of the above methods are presented in Table \ref{TableComparison}. It can be seen that our method is comparable with $C^0$ IPG, Argyris and Morley methods, and has much less degrees of freedom than these methods. Furthermore, compared with the usual mixed method, C-R(2) and C-R(3), our method does not generate spurious eigenvalues in non-convex domain.

Next we investigate the convergence rate of the modified mixed method on uniform meshes. The initial mesh $\mathcal{T}_{h_0}$ is shown in Figure \ref{initialMesh}. The finer mesh $\mathcal{T}_{h_{l}}$ is obtained by dividing the initial mesh uniformly after $l$ times. Since the exact eigenvalues are unknown, we use the following numerical convergence rate
\begin{align}
\mathcal{R} = \log_2\frac{|\lambda_{i,h_l}-\lambda_{i,h_{l-1}}|}{|\lambda_{i,h_{l+1}}-\lambda_{i,h_{l}}|}\label{rateIndicator}
\end{align}
as an indicator of the actual convergence rate. Here $\lambda_{i,h_l}$ denotes the $i$-th numerical eigenvalue obtained by the modified mixed method on the mesh $\mathcal{T}_{h_l}$. Table \ref{EigensTest1} lists the first six numerical eigenvalues on the L-shaped domain using uniform meshes. Table \ref{rateLshaped} records the convergence history of the first six eigenvalues obtained by the modified mixed method. Surprisingly, despite the re-entrant corner, the convergence rates are all $\mathcal{O}(h^2)$, which is better than the rate $\mathcal{O}(h^{2\alpha})$ predicted in Theorem \ref{theoremFine}.
\begin{figure}[!htbp]\centering
\subfigure[\label{initialMesh}]{\includegraphics[scale=0.5,viewport=82 29 351 297,clip=true]{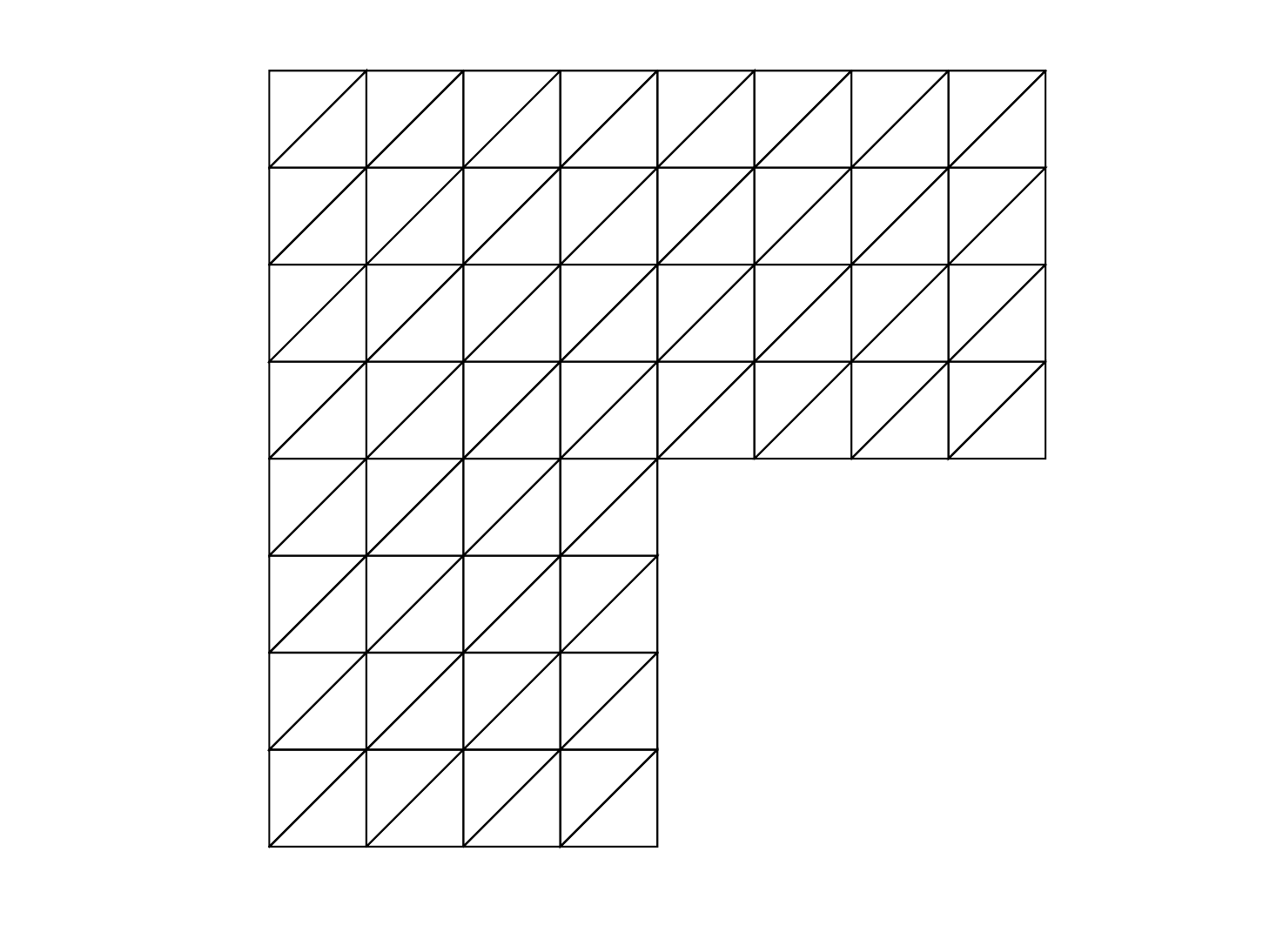}}
\subfigure[\label{initialMeshSlit}]{\includegraphics[scale=0.5,viewport=82 29 351 297,clip=true]{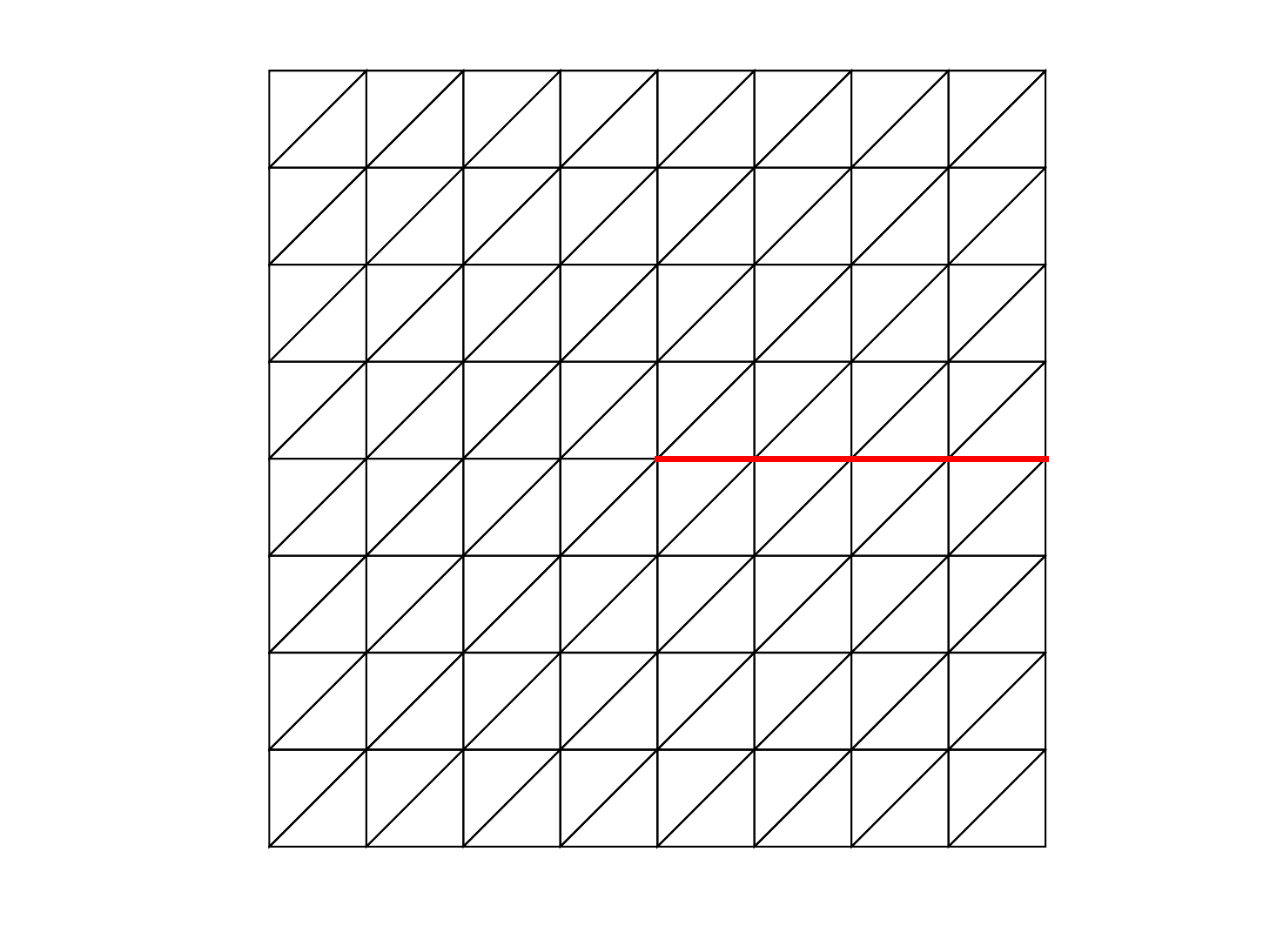}}
\subfigure[\label{initialMeshHole}]{\includegraphics[scale=0.5,viewport=82 29 351 297,clip=true]{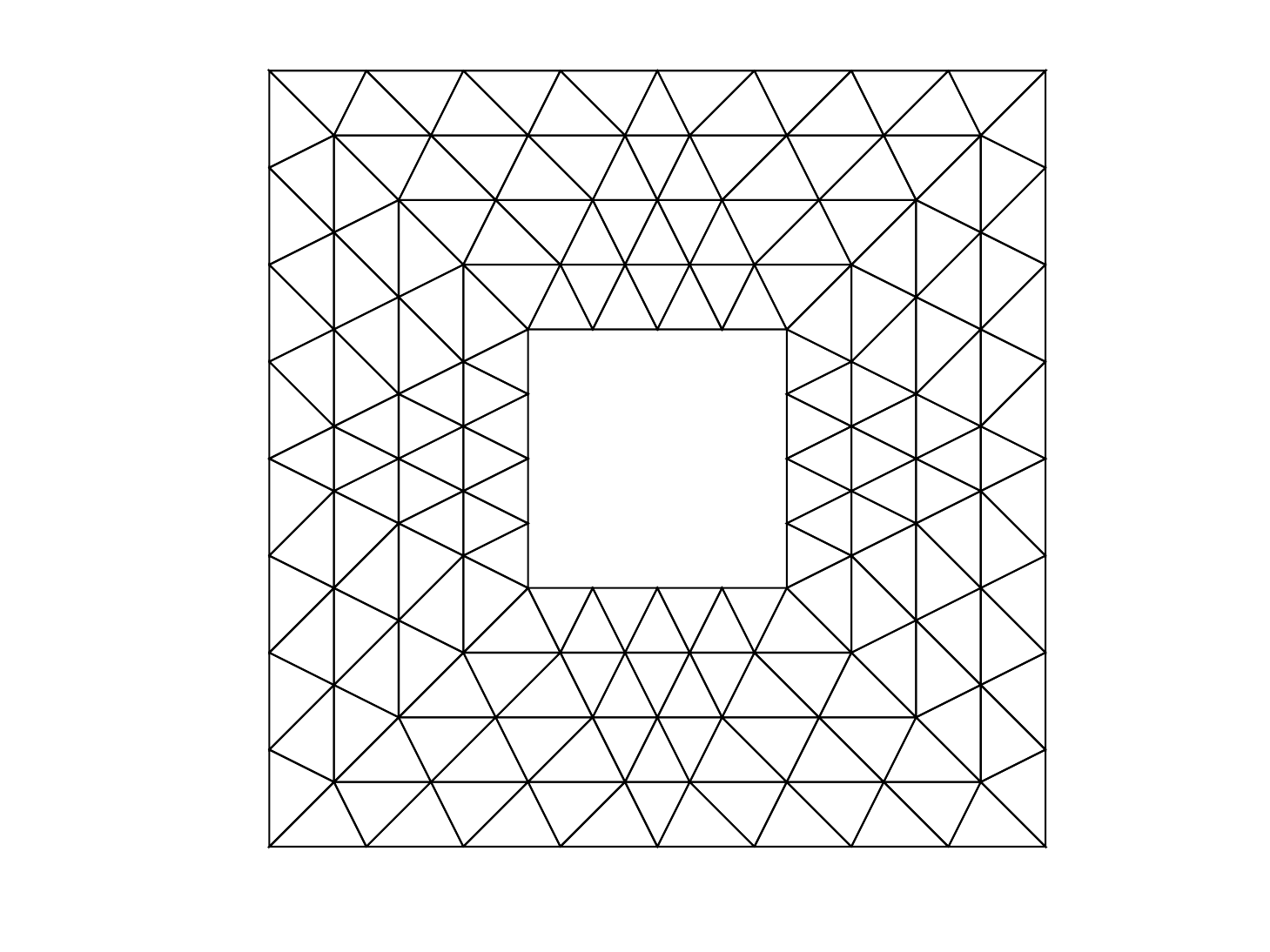}}
\caption{Initial meshes for (a) the L-shaped domain, (b) the slit domain and (c) the square ring.\label{figureDomains}}
\end{figure}
\begin{table}[!htbp]\centering
\caption{The first six eigenvalues obtained by seven methods on the L-shaped domain.\label{TableComparison}}
\scalebox{0.9}[0.9]{
\begin{tabular}{*{8}{c@{\extracolsep{5pt}}@{\extracolsep{5pt}}}}\hline
    &dof    &$\lambda_{1,h}$ & $\lambda_{2,h}$ & $\lambda_{3,h}$ &$\lambda_{4,h}$  &$\lambda_{5,h}$& $\lambda_{6,h}$\\\hline
 Modified Mixed                             &8077 & 2621& 3699 & 6242& 13968& 19234& 31021 \\
 $C^0$IPG\cite{brenner2015c0interior}       &32705 & 2718& 3743 & 6061& 13666& 19156& 31027 \\
 Argyris\cite{brenner2015c0interior}        &74454 & 2692& 3765 & 6234& 13972& 19375& 31281 \\
 Morley \cite{brenner2015c0interior}        &33025 & 2414& 3663 & 6225& 13904& 18642& 30002 \\
 Mixed \cite{brenner2015c0interior}         &8097 & 1491& 3699 & 6242& 13969& 16354& 27617 \\
 C-R(2) \cite{yang2018theadaptive}          &5890 & 1490& 3695 & 6234& 13945& 16319& 27547 \\
 C-R(3) \cite{yang2018theadaptive}          &3266 & 1490& 3695 & 6234& 13944& 16318& 27545 \\
\hline
\end{tabular}}
\end{table}
\begin{table}[!htbp]\centering
\caption{The first six eigenvalues obtained by modified mixed method on the L-shaped domain using uniform meshes.\label{EigensTest1}}
\scalebox{0.9}[0.9]{
\begin{tabular}{*{7}{c@{\extracolsep{5pt}}@{\extracolsep{5pt}}}}\hline
    dof   &$\lambda_{1,h}$ & $\lambda_{2,h}$ & $\lambda_{3,h}$ &$\lambda_{4,h}$  &$\lambda_{5,h}$& $\lambda_{6,h}$\\\hline
          47   &  2822.2404  &  4595.5512  &  8342.5404  &  20993.3799  &  27482.8942  &  48416.0562  \\
         191   &  2673.3623  &  3910.3929  &  6726.3337  &  15565.4487  &  21179.2559  &  34840.8046  \\
         767   &  2633.7149  &  3748.6548  &  6355.0676  &  14341.0616  &  19692.8282  &  31894.6539  \\
        3071   &  2623.4781  &  3708.6511  &  6264.2695  &  14042.9825  &  19322.7808  &  31183.8024  \\
       12287   &  2620.7658  &  3698.6468  &  6241.6955  &  13968.9335  &  19229.8576  &  31007.0613  \\
       49151   &  2620.0725  &  3696.1406  &  6236.0597  &  13950.4466  &  19206.5326  &  30962.7136  \\
      196607   &  2619.8900  &  3695.5130  &  6234.6513  &  13945.8259  &  19200.6648  &  30951.5609  \\
      786431   &  2619.8424  &  3695.3559  &  6234.2992  &  13944.6707  &  19199.1893  &  30948.7547  \\
     3145727   &  2619.8300  &  3695.3166  &  6234.2112  &  13944.3818  &  19198.8182  &  30948.0485  \\
    125829111  &  2619.8268  &  3695.3067  &  6234.1892  &  13944.3096  &  19198.7249  &  30947.8708  \\
\hline
\end{tabular}}
\end{table}
\begin{table}[!htbp]\centering
\caption{The convergence history of the first six eigenvalues in the L-shaped domain.\label{rateLshaped}}
\scalebox{0.7}[0.7]{
\begin{tabular}{*{13}{c@{\extracolsep{5pt}}@{\extracolsep{5pt}}}}\hline
  $h$ &$|\lambda_{1,h_{l+1}}-\lambda_{1,h_l}|$&  $\mathcal{R}$ & $|\lambda_{2,h_{l+1}}-\lambda_{2,h_l}|$&  $\mathcal{R}$  & $|\lambda_{3,h_{l+1}}-\lambda_{3,h_l}|$ &  $\mathcal{R}$ &$|\lambda_{4,h_{l+1}}-\lambda_{4,h_l}|$ &  $\mathcal{R}$ &$|\lambda_{5,h_{l+1}}-\lambda_{5,h_l}|$ &  $\mathcal{R}$ & $|\lambda_{6,h_{l+1}}-\lambda_{6,h_l}|$ &  $\mathcal{R}$\\\hline
0.1768& 1.489e+02&      & 6.852e+02&      & 1.616e+03&      & 5.428e+03&      & 6.304e+03&      & 1.358e+04&   \\
0.0884& 3.965e+01& 1.91& 1.617e+02& 2.08& 3.713e+02& 2.12& 1.224e+03& 2.15& 1.486e+03& 2.08& 2.946e+03& 2.20   \\
0.0442& 1.024e+01& 1.95& 4.000e+01& 2.02& 9.080e+01& 2.03& 2.981e+02& 2.04& 3.700e+02& 2.01& 7.109e+02& 2.05   \\
0.0221& 2.712e+00& 1.92& 1.000e+01& 2.00& 2.257e+01& 2.01& 7.405e+01& 2.01& 9.292e+01& 1.99& 1.767e+02& 2.01   \\
0.0110& 6.932e--01& 1.97& 2.506e+00& 2.00& 5.636e+00& 2.00& 1.849e+01& 2.00& 2.333e+01& 1.99& 4.435e+01& 1.99   \\
0.0055& 1.825e--01& 1.93& 6.277e--01& 2.00& 1.408e+00& 2.00& 4.621e+00& 2.00& 5.868e+00& 1.99& 1.115e+01& 1.99   \\
0.0028& 4.761e--02& 1.94& 1.571e--01& 2.00& 3.521e--01& 2.00& 1.155e+00& 2.00& 1.476e+00& 1.99& 2.806e+00& 1.99   \\
0.0014& 1.239e--02& 1.94& 3.931e--02& 2.00& 8.802e--02& 2.00& 2.888e--01& 2.00& 3.711e--01& 1.99& 7.062e--01& 1.99   \\
0.0007& 3.211e--03& 1.95& 9.832e--03& 2.00& 2.200e--02& 2.00& 7.221e--02& 2.00& 9.330e--02& 1.99& 1.777e--01& 1.99   \\
\hline
\end{tabular}}
\end{table}

\subsection{Slit domain}\label{numericaltest2}
In the second experiment, we consider the slit domain $\Omega=(0,1)^2\setminus\{1/2\le x\le 1,\ y=1/2\}$, with maximal re-entrant corner of angle $2\pi$. The initial mesh on this domain is shown in Figure \ref{initialMeshSlit}. We also refine the initial mesh uniformly to get finer meshes. Since the exact eigenvalues are unknown, we also use \eqref{rateIndicator} as an indicator of the actual convergence rate. In Table \ref{EigensTest2}, we present the first six numerical eigenvalues obtained by the modified mixed method on the slit domain. In Table \ref{rateslit}, we display the convergence history of these eigenvalues. As the previous experiment, although the eigenfunctions are of low regularity around the corner, all numerical eigenvalues on the uniform meshes achieve the optimal rate $\mathcal{O}(h^2)$. In Table \ref{TableComparisonSlit}, we compare the modified mixed method with other four methods. The mesh size used in the computations is $\approx 0.0055.$ Comparing with the modified mixed method, the $C^0$ IPG method, the Morley element and the Argyris element behave better when the eigenfunctions are very smooth. However, the performance of the modified mixed method is better than other methods when the eigenfunctions are less smooth. Like the previous experiment, we observe from Table \ref{TableComparisonSlit} that the usual mixed finite element method may produce spurious eigenvalues when the eigenfunctions are less smooth, while the modified mixed method is free of spurious eigenvalues.

\begin{table}\centering
\caption{The first six eigenvalues obtained by modified mixed method on the slit domain.\label{EigensTest2}}
\scalebox{0.9}[0.9]{
\begin{tabular}{*{7}{c@{\extracolsep{5pt}}@{\extracolsep{5pt}}}}\hline
    dof   &$\lambda_{1,h}$ & $\lambda_{2,h}$ & $\lambda_{3,h}$ &$\lambda_{4,h}$  &$\lambda_{5,h}$& $\lambda_{6,h}$\\\hline
        45  &  2765.8409  &  2888.8669  &  5544.7613  &  8338.4977  &  17802.8847  &  24871.4901  \\
      217  &  2539.5982  &  2706.2879  &  4699.0746  &  6725.9941  &  13760.5363  &  18466.1474  \\
      945  &  2461.1212  &  2689.6879  &  4498.9558  &  6355.0434  &  12827.9667  &  16955.2271  \\
      3937  &  2441.6869  &  2686.0276  &  4449.5111  &  6264.2678  &  12599.5952  &  16584.8771  \\
     16065  &  2436.8412  &  2685.1205  &  4437.1681  &  6241.6953  &  12542.7854  &  16492.7877  \\
     64897  &  2435.6307  &  2684.9069  &  4434.0811  &  6236.0597  &  12528.5991  &  16469.7971  \\
    260865  &  2435.3281  &  2684.8510  &  4433.3090  &  6234.6513  &  12525.0534  &  16464.0514  \\
   1046017  &  2435.2525  &  2684.8372  &  4433.1159  &  6234.2992  &  12524.1670  &  16462.6151  \\
   4189185  &  2435.2336  &  2684.8336  &  4433.0677  &  6234.2112  &  12523.9454  &  16462.2561  \\
  16766977  &  2435.2289  &  2684.8327  &  4433.0556  &  6234.1892  &  12523.8900  &  16462.1663  \\
\hline
\end{tabular}}
\end{table}

\begin{table}[!htbp]\centering
\caption{The convergence history of the first six eigenvalues in the slit domain.\label{rateslit}}
\scalebox{0.7}[0.7]{
\begin{tabular}{*{13}{c@{\extracolsep{5pt}}@{\extracolsep{5pt}}}}\hline
  $h$ &$|\lambda_{1,h_{l+1}}-\lambda_{1,h_l}|$&  $\mathcal{R}$ & $|\lambda_{2,h_{l+1}}-\lambda_{2,h_l}|$&  $\mathcal{R}$  & $|\lambda_{3,h_{l+1}}-\lambda_{3,h_l}|$ &  $\mathcal{R}$ &$|\lambda_{4,h_{l+1}}-\lambda_{4,h_l}|$ &  $\mathcal{R}$ &$|\lambda_{5,h_{l+1}}-\lambda_{5,h_l}|$ &  $\mathcal{R}$ & $|\lambda_{6,h_{l+1}}-\lambda_{6,h_l}|$ &  $\mathcal{R}$\\\hline
0.1768& 2.262e+02&      & 1.826e+02&      & 8.457e+02&      & 1.613e+03&      & 4.042e+03&      & 6.405e+03&   \\
0.0884& 7.848e+01& 1.53& 1.660e+01& 3.46& 2.001e+02& 2.08& 3.710e+02& 2.12& 9.326e+02& 2.12& 1.511e+03& 2.08   \\
0.0442& 1.943e+01& 2.01& 3.660e+00& 2.18& 4.944e+01& 2.02& 9.078e+01& 2.03& 2.284e+02& 2.03& 3.703e+02& 2.03   \\
0.0221& 4.846e+00& 2.00& 9.071e--01& 2.01& 1.234e+01& 2.00& 2.257e+01& 2.01& 5.681e+01& 2.01& 9.209e+01& 2.01   \\
0.0110& 1.211e+00& 2.00& 2.136e--01& 2.09& 3.087e+00& 2.00& 5.636e+00& 2.00& 1.419e+01& 2.00& 2.299e+01& 2.00   \\
0.0055& 3.026e--01& 2.00& 5.583e--02& 1.94& 7.721e--01& 2.00& 1.408e+00& 2.00& 3.546e+00& 2.00& 5.746e+00& 2.00   \\
0.0028& 7.564e--02& 2.00& 1.387e--02& 2.01& 1.931e--01& 2.00& 3.521e--01& 2.00& 8.864e--01& 2.00& 1.436e+00& 2.00   \\
0.0014& 1.891e--02& 2.00& 3.580e--03& 1.95& 4.828e--02& 2.00& 8.802e--02& 2.00& 2.216e--01& 2.00& 3.591e--01& 2.00   \\
0.0007& 4.727e--03& 2.00& 8.850e--04& 2.02& 1.207e--02& 2.00& 2.200e--02& 2.00& 5.540e--02& 2.00& 8.977e--02& 2.00   \\
\hline
\end{tabular}}
\end{table}

\begin{table}[!htbp]\centering
\caption{The first six eigenvalues obtained by five methods on the slit domain.\label{TableComparisonSlit}}
\scalebox{0.9}[0.9]{
\begin{tabular}{*{8}{c@{\extracolsep{5pt}}@{\extracolsep{5pt}}}}\hline
    &dof    &$\lambda_{1,h}$ & $\lambda_{2,h}$ & $\lambda_{3,h}$ &$\lambda_{4,h}$  &$\lambda_{5,h}$& $\lambda_{6,h}$\\\hline
 Modified Mixed &16065  &  2436.8412  &  2685.1205  &  4437.1681  &  6241.6953  &  12542.7854  &  16492.7877  \\
 $C^0$IPG       &260865  &  2436.5873  &  2718.3221  &  4453.8718  &  6242.5260  &  12545.4672  &  16492.9858  \\
 Argyris        &590458  &  2435.2273  &  2691.6942  &  4438.9371  &  6234.1818  &  12525.8683  &  16462.1364  \\
 Morley         &262145  &  2434.8270  &  2642.2727  &  4428.3125  &  6232.8523  &  12517.9292  &  16455.8866  \\
 Mixed          &16065  &  1133.0888  &  2436.8413  &  4437.1681  &  6241.6951  &  12542.7854  &  15053.5876  \\
\hline
\end{tabular}}
\end{table}

\subsection{Square ring}
Finally, we consider the square ring $\Omega = (0,1)^2\setminus([1/3,2/3]^2)$. In this experiment, we use quasi-uniform meshes. The initial mesh on this domain is shown in Figure \ref{initialMeshHole}, and we refine the initial mesh uniformly to obtain finer meshes. The numerical eigenvalues and related convergence history are reported in Tables \ref{EigensTest3} and \ref{ratehole}, respectively. It can be seen from Table \ref{ratehole} that despite the four re-entrant corners all numerical eigenvalues converge at the optimal rate $\mathcal{O}(h^2)$. Note that the convergence rate is better than what we obtained in Theorem \ref{theoremFine}. In Table \ref{TableComparisonHole}, we display numerical results of five different methods on a quasi-uniform mesh with mesh size $\approx0.0039.$ It can be seen that the performance of the modified mixed method is comparable to the $C^0$IPG method, the Argyris method and the Morley method. In addtion, comparing with the usual mixed method, our method does not generate spurious eigenvalues.

It worth mentioning that our method has certain advantages in approximating multiple eigenvalues. In this experiment, the second and the third eigenvalues are repeated, i.e. $\lambda_2=\lambda_3.$ The corresponding eigenfunctions are depicted in Figure \ref{eigenfunshole}. It can be seen that the second eigenfunction has singularities at the upper-left and low-right of the hole, while the third eigenfunction is singular at the other two opposite corners. To capture the singularities, a popular way is to use adaptive scheme. Several studies have shown that an effective adaptive strategy for multiple eigenvalues should consider all involved discrete eigenfunctions\cite{boffi2017posteriori,solin2012eigen,dai2008convergence}, otherwise the singularities of the target eigenfunctions may be resolved in a wrong way. However, the main issue is that in general it is not known \textit{a priori} the multiplicity of an eigenvalue of the continuous problem, which in turn brings difficulties to the design of efficient adaptive methods.

Interestingly, it seems that our method does not need to consider these problems. As mentioned before, our method enjoys $\mathcal{O}(h^2)$ convergence rate for the first six eigenvalues on quasi-uniform meshes, which means that there is no need to use adaptive method. Therefore, we do not need to bother to design efficient adaptive strategies, especially when the multiplicity of an eigenvalue is not known \textit{a priori}. Moreover, due to the symmetry of the initial mesh and the uniform refinement strategy, numerical eigenvalues which approximate a multiple eigenvalue can be very close. For instance $\lambda_{2,h}$ and $\lambda_{3,h}$ in Table \ref{EigensTest3} are almost equal as $h\rightarrow 0$. To sum up, our method has some potential in calculating multiple eigenvalues.

\begin{table}[!htbp]\centering
\caption{The first six eigenvalues obtained by modified mixed method on the square ring.\label{EigensTest3}}
\scalebox{0.9}[0.9]{
\begin{tabular}{*{7}{c@{\extracolsep{5pt}}@{\extracolsep{5pt}}}}\hline
    dof   &$\lambda_{1,h}$ & $\lambda_{2,h}$ & $\lambda_{3,h}$ &$\lambda_{4,h}$  &$\lambda_{5,h}$& $\lambda_{6,h}$\\\hline
    72  &  13199.3873  &  15043.5260  &  15221.0294  &  16069.9131  &  19530.5875  &  26203.3113  \\
    336  &  11833.8354  &  12463.7720  &  12536.8434  &  14675.5949  &  16125.1736  &  22760.3983  \\
   1440  &  11618.2080  &  12247.8454  &  12248.3350  &  14321.0590  &  15722.7245  &  21997.2147  \\
   5952  &  11586.6967  &  12204.9675  &  12204.9747  &  14231.2062  &  15644.8961  &  21808.1987  \\
  24192  &  11578.4400  &  12193.8534  &  12193.8536  &  14208.5011  &  15625.2749  &  21760.9347  \\
  97536  &  11576.3175  &  12191.0105  &  12191.0105  &  14202.7829  &  15620.2472  &  21749.0480  \\
 391680  &  11575.7742  &  12190.2890  &  12190.2890  &  14201.3466  &  15618.9871  &  21746.0760  \\
1569792  &  11575.6342  &  12190.1048  &  12190.1048  &  14200.9864  &  15618.6663  &  21745.3307  \\
6285312  &  11575.5987  &  12190.0583  &  12190.0583  &  14200.8962  &  15618.5853  &  21745.1440  \\
\hline
\end{tabular}}
\end{table}

\begin{table}[!htbp]\centering
\caption{The convergence history of the first six eigenvalues in the square ring.\label{ratehole}}
\scalebox{0.7}[0.7]{
\begin{tabular}{*{13}{c@{\extracolsep{5pt}}@{\extracolsep{5pt}}}}\hline
  $h$ &$|\lambda_{1,h_{l+1}}-\lambda_{1,h_l}|$&  $\mathcal{R}$ & $|\lambda_{2,h_{l+1}}-\lambda_{2,h_l}|$&  $\mathcal{R}$  & $|\lambda_{3,h_{l+1}}-\lambda_{3,h_l}|$ &  $\mathcal{R}$ &$|\lambda_{4,h_{l+1}}-\lambda_{4,h_l}|$ &  $\mathcal{R}$ &$|\lambda_{5,h_{l+1}}-\lambda_{5,h_l}|$ &  $\mathcal{R}$ & $|\lambda_{6,h_{l+1}}-\lambda_{6,h_l}|$ &  $\mathcal{R}$\\\hline
0.1250& 1.366e+03&      & 2.580e+03&      & 2.684e+03&      & 1.394e+03&      & 3.405e+03&      & 3.443e+03&   \\
0.0625& 2.156e+02& 2.66& 2.159e+02& 3.58& 2.885e+02& 3.22& 3.545e+02& 1.98& 4.024e+02& 3.08& 7.632e+02& 2.17   \\
0.0313& 3.151e+01& 2.77& 4.288e+01& 2.33& 4.336e+01& 2.73& 8.985e+01& 1.98& 7.783e+01& 2.37& 1.890e+02& 2.01   \\
0.0156& 8.257e+00& 1.93& 1.111e+01& 1.95& 1.112e+01& 1.96& 2.271e+01& 1.98& 1.962e+01& 1.99& 4.726e+01& 2.00   \\
0.0078& 2.122e+00& 1.96& 2.843e+00& 1.97& 2.843e+00& 1.97& 5.718e+00& 1.99& 5.028e+00& 1.96& 1.189e+01& 1.99   \\
0.0039& 5.433e-01& 1.97& 7.215e-01& 1.98& 7.215e-01& 1.98& 1.436e+00& 1.99& 1.260e+00& 2.00& 2.972e+00& 2.00   \\
0.0020& 1.400e-01& 1.96& 1.842e-01& 1.97& 1.842e-01& 1.97& 3.602e-01& 2.00& 3.207e-01& 1.97& 7.453e-01& 2.00   \\
0.0010& 3.546e-02& 1.98& 4.650e-02& 1.99& 4.650e-02& 1.99& 9.022e-02& 2.00& 8.102e-02& 1.98& 1.867e-01& 2.00   \\
\hline
\end{tabular}}
\end{table}

\begin{table}[!htbp]\centering
\caption{The first six eigenvalues obtained by five methods on the square ring.\label{TableComparisonHole}}
\scalebox{0.9}[0.9]{
\begin{tabular}{*{8}{c@{\extracolsep{5pt}}@{\extracolsep{5pt}}}}\hline
    &dof    &$\lambda_{1,h}$ & $\lambda_{2,h}$ & $\lambda_{3,h}$ &$\lambda_{4,h}$  &$\lambda_{5,h}$& $\lambda_{6,h}$\\\hline
 Modified Mixed &97536  &  11576.318  &  12191.010  &  12191.010  &  14202.783  &  15620.247  &  21749.048  \\
 $C^0$IPG       &391680  &  11936.788  &  12541.947  &  12541.947  &  14484.496  &  15993.189  &  21995.884  \\
 Argyris        &885488  &  11771.381  &  12391.618  &  12391.618  &  14411.874  &  15819.480  &  21914.541  \\
 Morley         &393216  &  10950.898  &  11651.220  &  11651.220  &  14105.090  &  14971.428  &  21571.899  \\
 Mixed          & 97536  &   6008.641  &   7149.365  &   7149.365  &   9659.918  &  14202.783  &  21052.856  \\
\hline
\end{tabular}}
\end{table}

\begin{figure}[!htp]
\makeatletter
\renewcommand{\@thesubfigure}{\hskip\subfiglabelskip}
 \makeatother
\centering
\subfigure{\includegraphics[scale=0.35,viewport=81 189 525 590,clip=true]{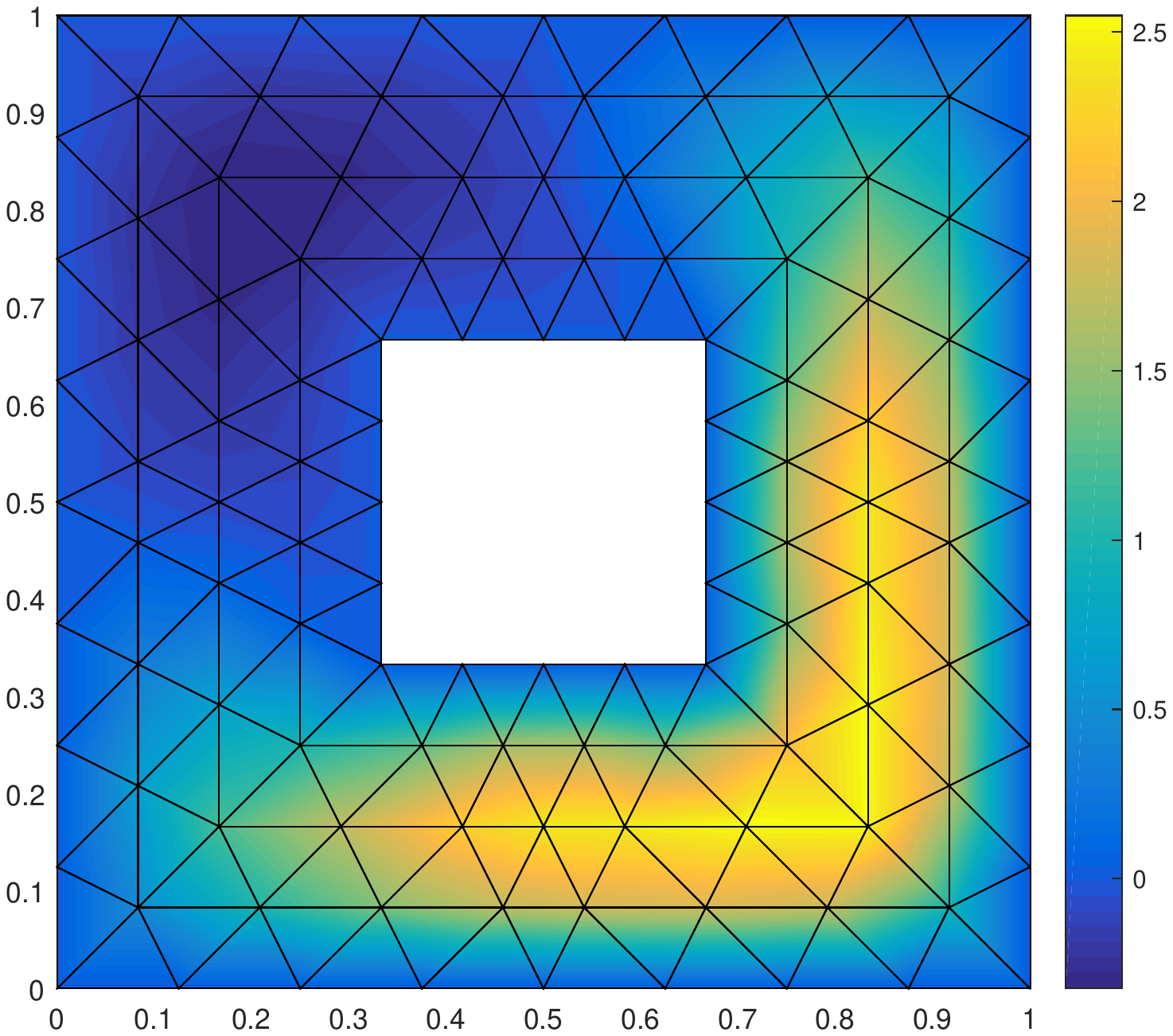}}\hspace{.2in}
\subfigure{\includegraphics[scale=0.35,viewport=81 189 525 590,clip=true]{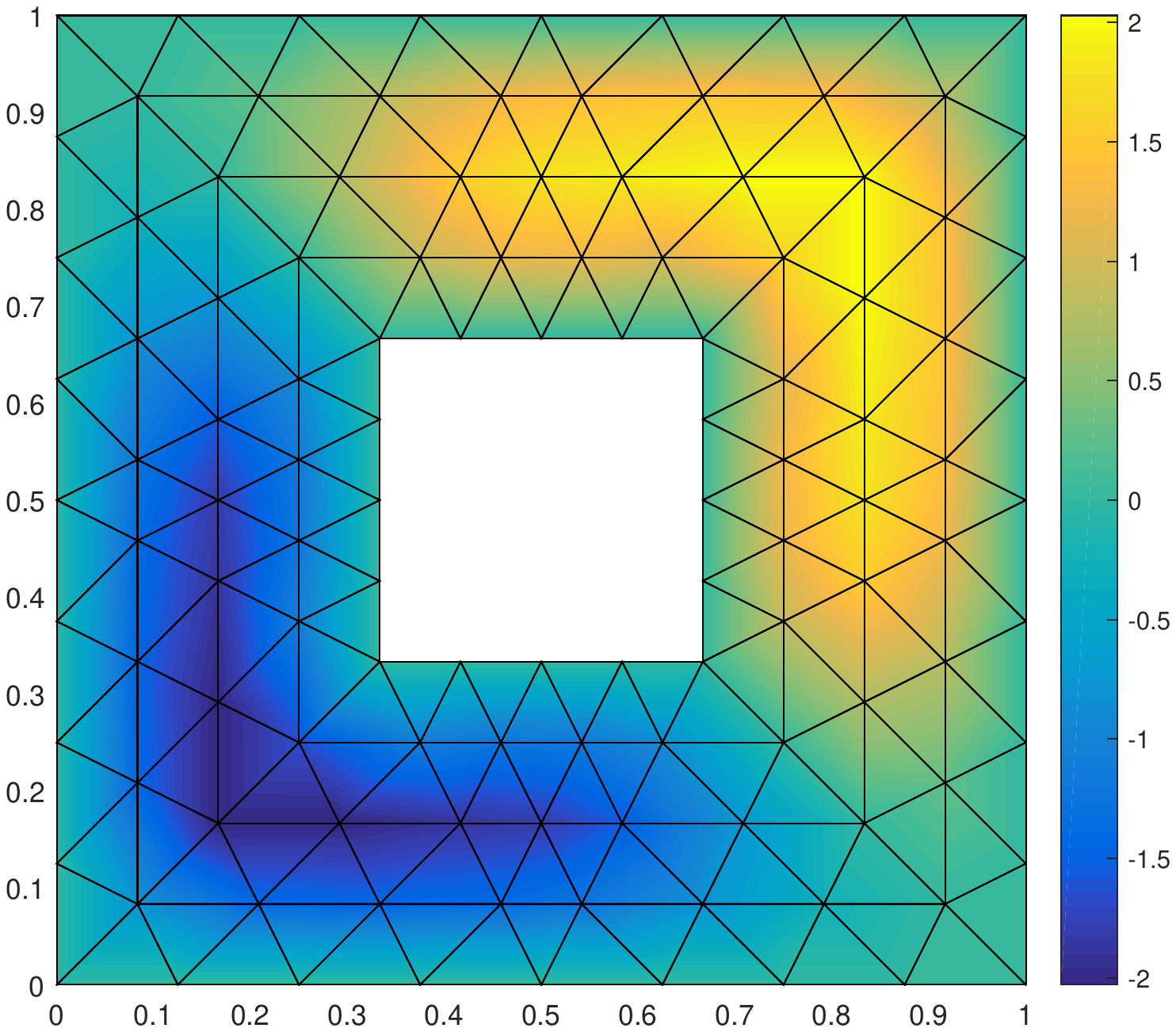}}\\
\subfigure{\includegraphics[scale=0.35,viewport=81 189 525 590,clip=true]{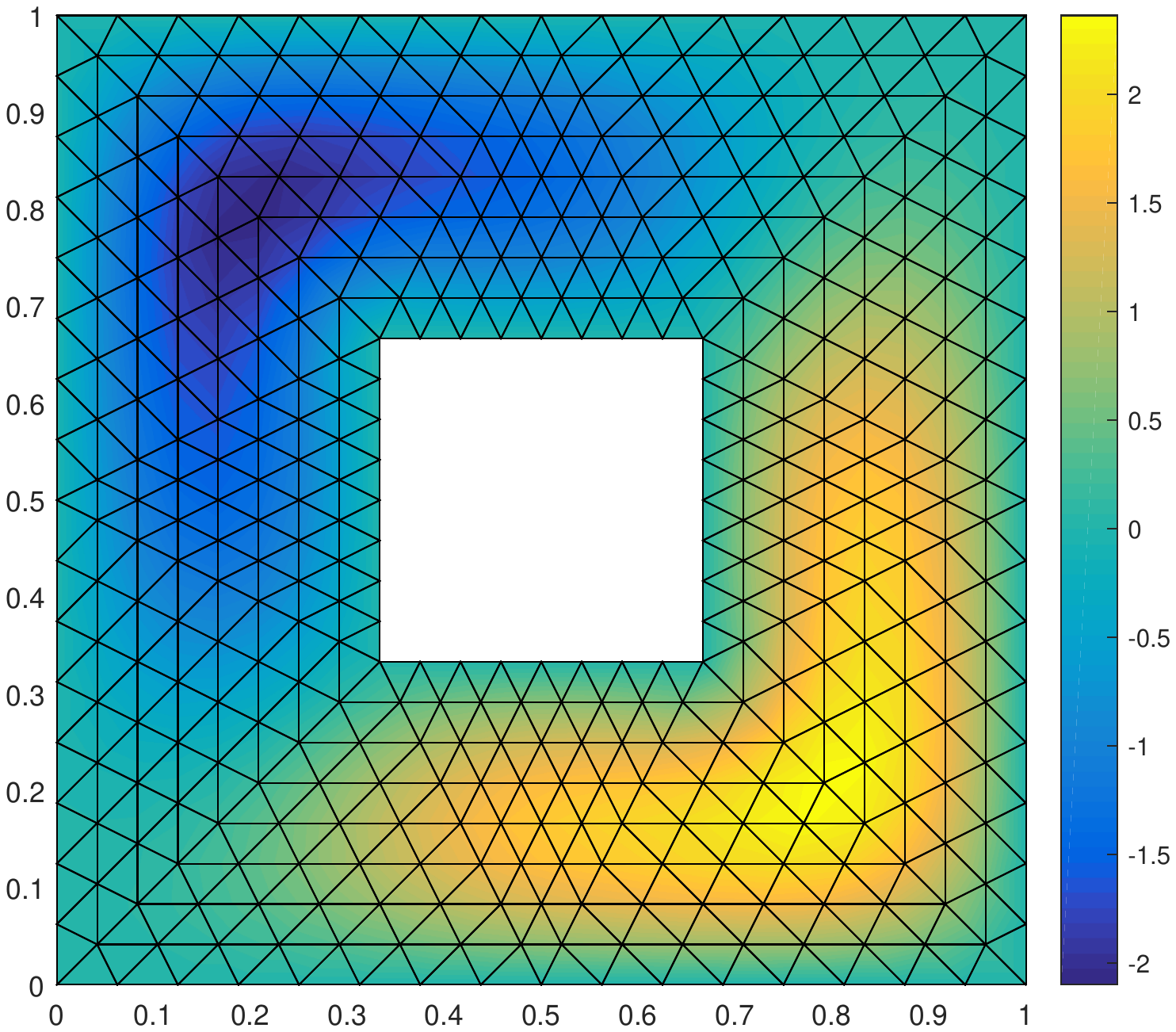}}\hspace{.2in}
\subfigure{\includegraphics[scale=0.35,viewport=81 189 525 590,clip=true]{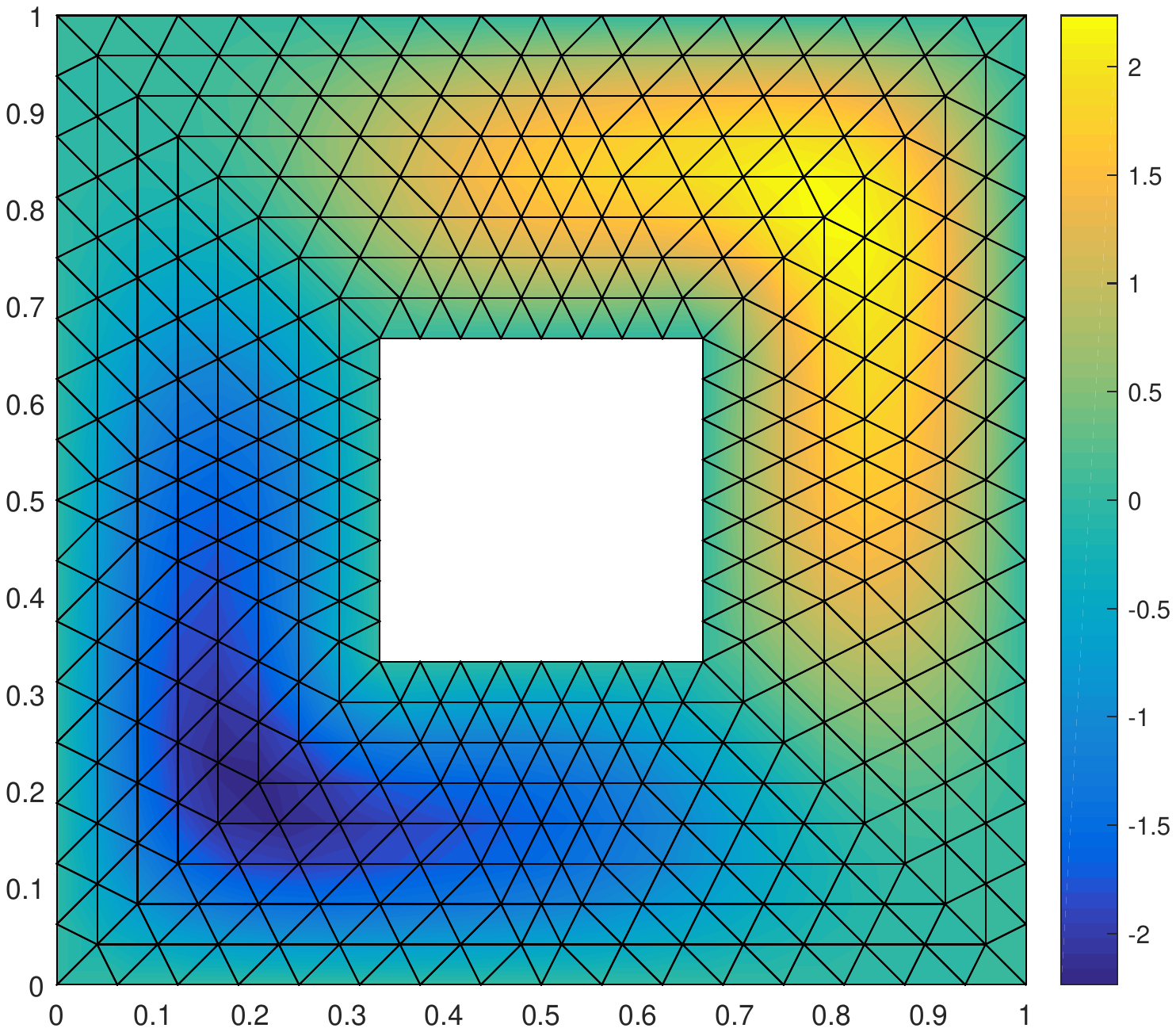}}\\
\subfigure{\includegraphics[scale=0.35,viewport=81 189 525 590,clip=true]{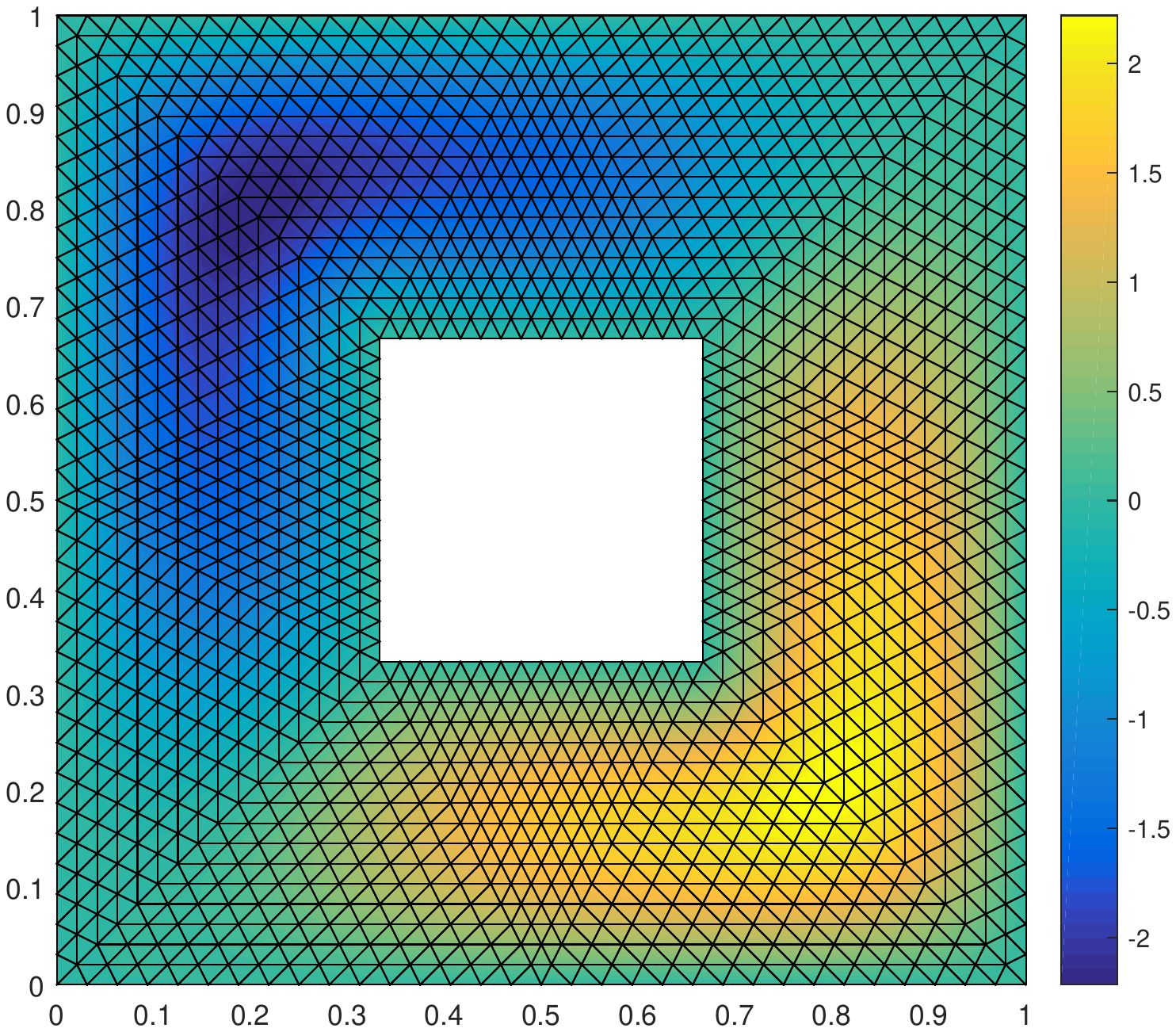}}\hspace{.2in}
\subfigure{\includegraphics[scale=0.35,viewport=81 189 525 590,clip=true]{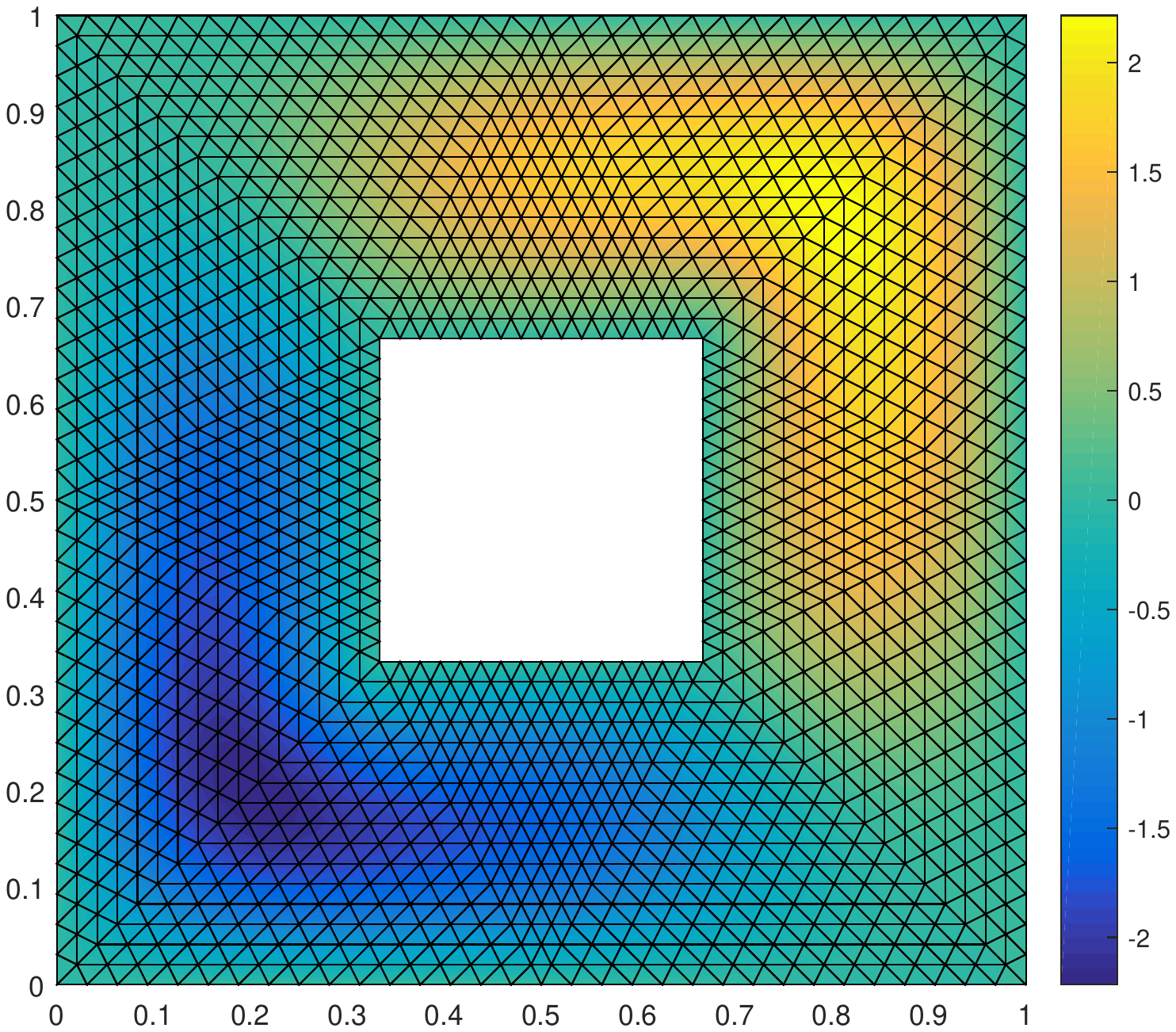}}\\
\caption{The eigenfunctions corresponding to $\lambda_2$ (left) and $\lambda_3$ (right) on the square ring for the three refinement levels \label{eigenfunshole}}
\end{figure}

\section*{Acknowledgments}
The authors would like to thanks anonymous referees for their valuable comments. This work is supported in part by the National Natural Science Foundation of China grants NSFC 11871092 and NSAF U1930402.
\bibliographystyle{abbrv}
\bibliography{MixedFemBiEigen}
\end{document}